\documentclass[11pt,a4paper]{article}

\usepackage{tikz}
\usepackage{amsthm}
\usepackage{amsmath}
\usepackage{amssymb}
\usepackage{mathrsfs}
\usepackage{geometry}
\usepackage{graphicx}
\usepackage{amsfonts}
\usepackage{epstopdf}
\usepackage{enumerate}
\usepackage{enumitem}
\usepackage{hyperref}
\usepackage{caption}
\usepackage{soul}
\usepackage{subfig}
\usepackage[normalem]{ulem}

\numberwithin{equation}{section}

\allowdisplaybreaks

\newcommand{\Rmnum}[1]{\uppercase\expandafter{\romannumeral#1}} 

\def\Xint#1{\mathchoice
	{\XXint\displaystyle\textstyle{#1}}%
	{\XXint\textstyle\scriptstyle{#1}}%
	{\XXint\scriptstyle\scriptscriptstyle{#1}}%
	{\XXint\scriptscriptstyle\scriptscriptstyle{#1}}%
	\!\int}
\def\XXint#1#2#3{{\setbox0=\hbox{$#1{#2#3}{\int}$ }
		\vcenter{\hbox{$#2#3$ }}\kern-.6\wd0}}

\def\dashint{\Xint-}

\theoremstyle{plain}
\newtheorem{theorem}{Theorem}[section]
\newtheorem{proposition}[theorem]{Proposition}
\newtheorem{lemma}[theorem]{Lemma}
\newtheorem{corollary}[theorem]{Corollary}

\newtheorem{example}[theorem]{Example}

\theoremstyle{definition}
\newtheorem{remark}[theorem]{Remark}

\renewcommand{\thefootnote}{}

\AtEndDocument{
	\bigskip{\footnotesize%
		\textsc{School of Mathematics, Nanjing University, Nanjing 210093, China.}
		\par 
		\textsc{School of Mathematics, Hangzhou Normal University, Hangzhou 310036, China}  \par  
		\textit{E-mail address}: \texttt{gaojin@hznu.edu.cn} \par
	}
	\bigskip{\footnotesize%
		\textsc{Department of Mathematical Sciences, Tsinghua University, Beijing 100084, China}  \par  
		\textit{E-mail address}: \texttt{songyj24@mails.tsinghua.edu.cn} \par
	}
}

\begin{document}
\title{The H\"older regularity of harmonic function on bounded and unbounded p.c.f self-similar sets}
\author{Jin Gao and Yijun Song}
\date{}
\maketitle
\begin{abstract}
In this paper, we prove a generalized reverse H\"older inequality of harmonic functions on cable systems induced by post-critically finite (p.c.f.) self-similar sets. Furthermore, we also establish the H\"older regularity of harmonic functions on both bounded and unbounded p.c.f. self-similar sets, which does not involve
heat kernel estimates and resistance estimates.
\end{abstract}

\footnote{\textsl{Date}: \today}
\footnote{\textsl{MSC2020}: 28A80, 35K08}
\footnote{\textsl{Keywords}: H\"older regularity, gradient estimates, harmonic functions, p.c.f. self-similar set.}
\footnote{Jin Gao was supported by National Natural Science Foundation of China (Grant. No. 12271282), and Zhejiang Provincial Natural Science Foundation of China (Grant. No. LQN25A010019). The authors acknowledge Jiaxin Hu for reading the preliminary draft and Meng Yang for some helpful comments.}

\renewcommand{\thefootnote}{\arabic{footnote}}
\setcounter{footnote}{0}

 \section{Introduction}
\label{sec:introduction}

Recall the following classical result established independently by Grigor'yan \cite{Gri92} and Saloff-Coste \cite{Sal92,Sal95}: on a complete non-compact Riemannian manifold, the conjunction of the volume doubling condition and the Poincar\'{e} inequality is equivalent to the two-sided Gaussian heat kernel estimates given by
\begin{equation}\label{eqn_HK2}\tag*{HK(2)}
	\frac{C_1}{V(x,\sqrt{t})}\exp\left(-C_2\frac{d(x,y)^2}{t}\right)\le p_t(x,y)\le\frac{C_3}{V(x,\sqrt{t})}\exp\left(-C_4\frac{d(x,y)^2}{t}\right),
\end{equation}
where $d(x, y)$ denotes the geodesic distance and $V(x, r)=m(B(x, r))$ is the Riemannian measure of the open ball.

Note that the specific gradient upper bound
$$
|\nabla_y p_t(x, y)| \le \frac{C_1}{\sqrt{t}\, V(x, \sqrt{t})} \exp\left(-C_2\frac{d(x, y)^2}{t}\right),
$$
which matches \ref{eqn_HK2}, is known to hold only in particular settings, including Riemannian manifolds with non-negative Ricci curvature \cite{LY86}, Lie groups with polynomial volume growth \cite{Sal90} and the references therein.
Such pointwise gradient estimates are crucial for analyzing the $L^p$-boundedness of the Riesz transform $\nabla \Delta^{-1/2}$ for $ 1<p<+\infty$. The problem of characterizing the equivalence of the two semi-norms $\|\nabla f\|_p$ and $\|\Delta^{1/2} f\|_p$ on $C^\infty_0(M)$ for a non-compact Riemannian manifold $M$ was posed by Strichartz in \cite{Str83}.
Under volume doubling and  heat kernel upper bound,  Coulhon and Duong proved that the Riesz transform is $L^p$-bounded for all $p \in (1, 2)$ (see \cite[Theorem 1.1]{CD99}). For the case $p > 2$, Auscher, Coulhon, Duong, and Hofmann \cite[Theorem 1.3]{ACDH04} established a suitable $L^p$-estimate for the heat kernel gradient is equivalent to the $L^p$-boundedness of the Riesz transform under \ref{eqn_HK2}.
Further related results on the Riesz transform can be found in \cite{CD03,CCH06,CJKS20,Dev15} and the references therein.

In a non-compact doubling Dirichlet metric measure space with a ``carr\'e du champ'', Coulhon, Jiang, Koskela and Sikora \cite[Theorem 1.2]{CJKS20} established that assuming an upper Gaussian heat kernel bound and a local $L^\infty$-Poincar\'e inequality, an upper bound for the heat kernel gradient is equivalent to the following quantitative reverse $L^\infty$-H\"older inequality for gradients of harmonic functions: there exists $C_H \in (0, +\infty)$ such that for every ball $B = B(x_0, r)$ and every function $u$ harmonic in $2B$,
$$
\bigl\|\lvert \nabla u \rvert\bigr\|_{L^{\infty}(B)} \le \frac{C_H}{r} \dashint_{2B} \lvert u \rvert \, dm. 
$$
See also \cite{Jiang21} for related results. 

On many fractals, such as the Sierpi\'nski gasket and carpet, the heat kernel satisfies the two-sided sub-Gaussian estimates
\begin{equation}\label{eqn_HK}\tag*{HK($\beta$)}
	\frac{C_1}{V(x,t^{1/\beta})}\exp\left(-C_2\left(\frac{d(x,y)}{t^{1/\beta}}\right)
	^{\frac{\beta}{\beta-1}}\right)\leq p_t(x,y)\leq\frac{C_3}{V(x,t^{1/\beta})}\exp\left(-C_4\left(\frac{d(x,y)}{t^{1/\beta}}\right)
	^{\frac{\beta}{\beta-1}}\right),
\end{equation}
where $\beta>2$ denotes the walk dimension. When $\beta=2$, 
\ref{eqn_HK} 
reduces to the classical Gaussian estimate \ref{eqn_HK2}; for general $\beta \geq2$, Barlow and Bass \cite{BB04} introduced the cutoff Sobolev inequality and proved that \ref{eqn_HK} is equivalent to the conjunction of volume doubling, the Poincar\'e inequality, and the cutoff Sobolev inequality.

A natural problem arising from \ref{eqn_HK} is to establish the corresponding gradient estimate 
\begin{equation}\label{eqn_GHK}\tag{GHK}
	|\nabla_y p_t(x,y)| \leq \frac{C_3}{t^{1-\alpha/\beta}V(x, t^{1/\beta})} \exp\left( -C_4 \left( \frac{d(x, y)}{t^{1/\beta}} \right)^{\frac{\beta}{\beta-1}} \right).
\end{equation}
Note that the usual gradient operator cannot be defined on fractals, we turn to consider the  
fractal-like cable systems, see \cite{Chen15,BC23,DRY23}.
Research on such gradient estimates is closely connected to the study of quasi-Riesz transforms. For instance, Chen \cite{Chen15} gave gradient estimates for sub-Gaussian heat kernels and their link to the $L^p$ boundedness of operators $\nabla e^{\Delta}\Delta^{-\epsilon}$ for $p\in (1,2]$. Subsequently, Chen, Coulhon, Feneuil and Russ \cite{CCFR17} established the complete range of validity for the Riesz and reverse Riesz transforms on Vicsek graphs. Following this line of work, Devyver and Russ \cite{DR26} considered quasi-Riesz transforms on Vicsek-like cable systems, and Feneuil\cite{Fen26} further extended these results to graphical Sierpi\'nski gaskets.

In \cite{DRY23}, Devyver, Russ and Yang established a pointwise gradient estimates for the heat kernel on fractal-like cable systems by 
generalized reverse H\"older inequality \eqref{eq_GRH}: there exists a constant $C>0$ such that for any harmonic function $u$ in a ball $2B$ of radius $r$, 
\begin{align}
	\left\Vert \left\vert \nabla u\right\vert\right\Vert_{L^{\infty}(B)} \leq C \, \frac{\Phi(r)}{\Psi(r)} \dashint_{2B} |u|  dm.
\end{align}
where the scaling function $\Phi(r)$ and $\Psi(r)$ were introduced by \eqref{psi}.
Very recently, under assumptions of slow volume regular condition and the heat kernel upper bound,  the first author and Yang proved that the heat kernel H\"older continuity with or without exponential terms is equivalent to the H\"older regularity \eqref{eq_HR}(Generalization of condition \eqref{eq_GRH}) on the unbounded  metric measure space with a strongly recurrent condition in 
\cite{GY26}. This equivalence implies the gradient estimate \eqref{eqn_GHK}.

In this paper, we mainly consider the two open problems as follows:
\begin{itemize}
	\item  Proving condition \eqref{eq_GRH} on cable system induced by a class of p.c.f. self-similar sets (See \cite[p15544]{DRY23}).
	\item  Proving condition \eqref{eq_HR} on the p.c.f. self-similar sets (See \cite[p4]{GY26}).
\end{itemize}
The key point to our proof lies in the harmonic extension of the p.c.f. self-similar sets, without involving heat kernel estimates or resistant estimates. Our method fails to prove condition \eqref{eq_GRH} and \eqref{eq_HR} on the Sierpi\'nski carpet, due to the lack of harmonic extension.

The paper is organized as follows. In Section~\ref{sec:main_result}, we recall the definition of Dirichlet form on p.c.f. self-similar set and introduce the Theorem \ref{thm_GRH} and Theorem \ref{corol_GRH}. In Section~\ref{sec:proof_grh}, we give the oscillation inequality (\ref{eq_OSC}) of harmonic function on p.c.f. self-similar set. In Section~\ref{holder}, we prove the  Theorem \ref{thm_GRH} and Theorem \ref{corol_GRH}. Finally, we give three examples to verify the generalized reverse H\"older inequality \eqref{eq_GRH} and H\"older regularity \eqref{eq_HR} in Section~\ref{sec:example}.

\textbf{Notation}: The letters $C$,$C_{i}$,$C'$,$c$,$c_i$
are universal positive constants depending only on $K$ which may vary at
each occurrence.

\section{Definitions and the Main Result}
\label{sec:main_result}

The analysis on p.c.f. self-similar sets was originally developed by Kigami. For
convenience of readers, we will briefly recall the constructions of Dirichlet
forms on p.c.f. fractals 
\cite{Kig01book}. 
Then we give the Theorem \ref{thm_GRH} and Theorem \ref{corol_GRH}.

For simplicity, we will focus on the self-similar sets in $\mathbb{R}^d$. 
Let $M \geq 2$ be an integer.
Let $\{F_i\}_{i=1}^{M}$ be an iterated function system (IFS) of $\rho$-similitudes on $\mathbb{R}^d$. That is, for each $i = 1, \dots, M$,
\begin{align}
F_i(x) = \rho x + a_i,
\end{align}
where $\rho \in (0, 1)$, $a_i \in \mathbb{R}^d$.  The associated self-similar set $K$ is the unique non-empty compact set in $\mathbb{R}^d$ satisfying
\begin{equation}
	\label{eq:self_similar_set}
	K = \bigcup_{i=1}^{M} F_i(K),
\end{equation}
We now define the associated symbolic space. Let $ S = \{1, \cdots, M\} $ be the alphabet. Let $ W_0 = \{0\} $, and for $m \geq 1$ let $ W_m $ be the set of words $ w = w_1 \cdots w_m $ of length $ m $ with $ w_i \in S$. Write $ |w| = m $ for the length of $ w $, and set $ W_{*}= \bigcup_{m \geq 0} W_m $. For $ w \in W_m $, we write $ F_w = F_{w_1} \circ \cdots \circ F_{w_m} $, and call $ F_wK $ an $ m $-cell of $ K $.

 Let $\Sigma$ be the set of infinite words $ \omega = \omega_1 \omega_2 \cdots $ with $ \omega_i \in S $, equipped with the product topology. Define a continuous surjection $ \pi : \Sigma \to K $ by
\begin{align*}
\{\pi(\omega)\} = \bigcap_{n \geq 1} F_{[\omega]_m} K,
\end{align*}
where $ [\omega]_m = \omega_1 \cdots \omega_m \in W_m $ for each $ m \geq 1$ and for $ \omega = \omega_1 \omega_2 \cdots $ in $\Sigma$.

Define the critical set $ \mathcal{C} $ and the post-critical set $\mathcal{P} $ for $ K $ by
\begin{align*}
\mathcal{C} = \pi^{-1} \left( \bigcup_{i \neq j} (F_i K \cap F_j K) \right), \quad \mathcal{P} = \bigcup_{m \geq 1} \tau^m(\mathcal{C}),
\end{align*}
where  $\tau : \Sigma \to \Sigma $ is the left shift map defined as $ \tau (\omega_1 \omega_2 \cdots ) = \omega_2 \omega_3 \cdots $. If $ \mathcal{P} $ is a finite set, we call $\{F_i\}_{i=1}^M $ a \textit{post-critically finite} (p.c.f.) IFS, and $ K $ a p.c.f. self-similar set.  
For any $w\in W_n$, define
\begin{align*}
V_0 = \pi(\mathcal{P}),\quad V_m = \bigcup_{i=1}^M F_i V_{m-1},\quad V_* = \bigcup_{m \geq 0} V_m,
\end{align*}
Let $V_0 = \{p_1, p_2, \dots, p_N\}$. The closure of $ V_* $ under the Euclidean metric $d$ is $ K $.

Without loss of generality, we can assume that $\text{diam} (K) = 1$, where ‘diam’ denotes the diameter, by an affine transformation on $\{a_i\}_{i=1}^m$. Throughout the paper, we always assume that a p.c.f. self-similar set is connected so we will omit ‘connected’.
Hence $K$ admits an $\mathcal{H}^{\alpha}$-measure denoted by $\mu$ that is $\alpha$-regular , namely, the measure of any metric ball $B(x, r) := \{y \in K : d(x, y) < r\}$ with $ 0< r \leq 1$ in $K$ satisfies that
\begin{align}
	C^{-1}r^{\alpha}\leq \mu(B(x, r))\leq Cr^{\alpha}, 
\end{align}
where $C\geq 1$ is a constant and $\alpha=\text{dim}_H(K)=-\frac{\log M}{\log \rho}$. 

Now we recall how to construct a Dirichlet form on a p.c.f. self-similar set $ K $ (also see\cite{HW06}).   

Let $ V_0 $ be the boundary as above, and let $ \ell(V_0) = \{ f: V_0 \to \mathbb{R} \} $ be the space of real functions on $ V_0 $. A matrix $ H $ on $ V_0 $ is called a \emph{Laplace matrix} (or simply a Laplace) if it satisfies:
\begin{itemize}
	\item $ H_{pq} = H_{qp} \geq 0 $ for any $ p \neq q \in V_0 $,
	\item $ \langle f, H g \rangle := \sum_{p \in V_0} f(p) \left( \sum_{q \in V_0} H_{pq} g(q) \right) \leq 0 $ for all $ f, g \in \ell(V_0) $,
	\item $ H f = 0 $ if and only if $ f $ is constant.
\end{itemize}

Given a Laplace matrix $ H $ on $ V_0 $ and a family of positive numbers $ \mathbf{r} = \{r_i\}_{i=1}^{M} $, called resistance weights, we define an energy form $ \mathcal{E}_m $ on $ V_m $ for $ m \geq 0 $ by:
\begin{align*}
	\mathcal{E}_0(f, g) &= -\langle f, H g \rangle, \\
	\mathcal{E}_m(f, g) &= \sum_{w \in W_m} r_w^{-1} \mathcal{E}_0(f \circ F_w, g \circ F_w) \quad (m \geq 1),
\end{align*}
where $ r_w = r_{w_1} r_{w_2} \cdots r_{w_m} $ for $ w = w_1 w_2 \cdots w_m $.

We say that $ K $ possesses a \emph{harmonic structure} if there exists a pair $ (H, \mathbf{r}) $ such that the following \emph{harmonic extension problem} is solvable for any $ f \in \ell(V_0) $:
	\begin{equation}
		\label{eq:variational_problem}
		\min \{ \mathcal{E}_1(g,g) : g|_{V_0} = f \} = \mathcal{E}_0(f,f). 
	\end{equation}
If, in addition, $ r_i < 1 $ for all $ 1 \leq i \leq M $, then the harmonic structure is said to be regular.
The existence and regularity of a harmonic structure on fractals are discussed in \cite{Kig93}.

Assuming a regular harmonic structure $ (H, \{r_i\}_{i=1}^{M}) $. For any $ f: V_* \to \mathbb{R} $,  define the sequence $ \{\mathcal{E}_m(f)\}_{m \geq 0} $ by
\begin{align}
	\mathcal{E}_m(f):=\mathcal{E}_m(f,f)=\frac{1}{2}\sum_{w\in W_m} \sum_{x,y\in V_w} H_{x,y}^{w} (f(x)-f(y))^2, 
\end{align}
where $H_{x,y}^w=r_w^{-1}H_{F_w^{-1}(x),F_w^{-1}(y)}$ with $x,y\in V_w=F_w(V_0)$. By \eqref{eq:variational_problem}, we know that $\mathcal{E}_m$ is non-decreasing. Define the Dirichlet form $ (\mathcal{E}, \mathcal{F}) $ on $ K $ by:
\begin{equation} \label{DF_Def}
\begin{cases}
	\mathcal{E}(f) :=\mathcal{E}(f,f)= \lim_{m \to \infty} \mathcal{E}_m(f,f), \\
	\mathcal{F} = \{ f \in C(K) : \mathcal{E}(f) < \infty \},
\end{cases}
\end{equation} 
It is known that for a suitable Borel measure $\mu$ on $K$ that charges every cell $K_w$ ($w \in W_n$), the form $(\mathcal{E}, \mathcal{F})$ is a local, regular, irreducible Dirichlet form on $L^2(K, \mu)$. Moreover, the energy $ \mathcal{E} $ is self-similar:
\begin{equation*}
	\label{eq:self_similar_energy}
	\mathcal{E}(f) = \sum_{i \in S} r_i^{-1} \mathcal{E}(f \circ F_i) \quad \text{for all } f \in \mathcal{F}.
\end{equation*}
Without loss of generality, we assume $r_i=r_j$ for all $i,j \in S$.
Under this assumption, we have that $r_i=\rho^{\beta-\alpha}$, where $\beta$ is the walk dimension associated with $(\mathcal{E}, \mathcal{F})$ and  it satisfies $2\leq \beta \leq \alpha + 1$.

Now, we first introduce the cable system $X$ induced by p.c.f. self-similar set $K$, also see \cite[Section 3]{DRY23}. 
Let $G_0$ be the set of all initial vertices of $X$ with $V_0\subseteq G_0$. Define 
\begin{align*}
	G_{k+1} = \cup_{i=1}^{M}F_i(G_k),~	G^{(k)} = \rho^{-k}G_k.
\end{align*}
Let $ G= \cup_{k=1}^{\infty}G^{(k)}, $ and $ E = \{ [p, q] : p, q \in G, |p - q| = 1 \} $, then $(G, E)$ is an infinite, locally bounded, connected graph, the corresponding unbounded cable system with $V=\cup_{k=1}^{\infty}\rho^{-k}V_k$. Each closed (open) cable is a closed (open) interval in $\mathbb{R}^2$ and
\begin{align}
	X = \bigcup_{\substack{p,q \in G \\ |p-q| = 1}} [p, q] \subseteq \mathbb{R}^2;
\end{align}
where $[p, q]$ denotes the closed interval with endpoints $ p, q \in \mathbb{R}^2 $. 
For any $n\ge0$, we say that a subset $W$ of $X$ is an $n$-skeleton if $W$ is a translation of the intersection of the closed convex hull of $G^{(n)}$ and $X$. 

Two functions associated functions $\Phi$ and $\Psi$ defined by
\begin{align}
\Phi(r)=r \mathbf{1}_{(0,1)}(r)+r^{\alpha}\mathbf{1}_{[1,+\infty)}(r),  \quad
\Psi(r)=r^2 \mathbf{1}_{(0,1)}(r)+r^{\beta}\mathbf{1}_{[1,+\infty)}(r).\label{psi}
\end{align}
Here, $\Phi$ typically governs the volume growth $V(x, r) \asymp \Phi(r)$, and $\Psi$ governs the time-scaling in the sub-Gaussian heat kernel estimates.

We say that the \emph{generalized reverse H\"older inequality} \eqref{eq_GRH} holds if there exists a constant $C \in (0, +\infty)$ such that for any ball $B = B(x_0, r)$ and for any function $u \in \mathcal{F}$ harmonic in $2B$, we have
\begin{align}
	\| |\nabla u| \|_{L^\infty(B)} \leq C \, \frac{\Phi(r)}{\Psi(r)} \dashint_{2B} |u| \, dm, \label{eq_GRH}\tag{GRH}
\end{align}
which is equivalent to an more explicit formulation, considering the piecewise definitions of $\Phi$ and $\Psi$, 
\begin{equation*}
	\label{eq:grh_explicit}
	\| |\nabla u| \|_{L^\infty(B)} \leq 
	\begin{cases}
		\dfrac{C}{r} \dashint_{2B} |u| \, dm, & \text{if } r \in (0, 1), \\
		\dfrac{C}{r^{\beta - \alpha}} \dashint_{2B} |u| \, dm, & \text{if } r \in [1, +\infty).
	\end{cases}
\end{equation*}

\begin{theorem}\label{thm_GRH}\rm
	Let $X$ be a cable system  induced by p.c.f. self-similar set $K$. We have that generalized reverse H\"older inequality \eqref{eq_GRH} holds on $X$.
\end{theorem}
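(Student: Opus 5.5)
We split according to whether $r\in(0,1)$ or $r\ge1$, and in both cases use the structure of harmonic functions on the cable system $X$. On each cable $[p,q]$ of unit length, a function harmonic on the cable is affine. So $|\nabla u|$ on that cable equals $|u(p)-u(q)|$. Hence $\||\nabla u|\|_{L^\infty(B)}$ is controlled by the maximal difference of $u$ across edges meeting $B$. In particular,
\[
\||\nabla u|\|_{L^\infty(B)}\le \operatorname{osc}_{B'} u ,
\]
where $B'$ is the union of the cables meeting $B$.

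\emph{Small balls, $r\in(0,1)$.} Here $2B$ contains only a bounded number of cables near $x_0$, and $u$ is affine on each one. If $2B$ contains a vertex, $u$ is affine on each cable emanating from it, with the Kirchhoff condition at the vertex. The number of cables at a vertex is uniformly bounded, since the graph has bounded degree. A direct one-dimensional computation should then give $|\nabla u|\le \frac{C}{r}\dashint_{2B}|u|\,dm$. On a single segment this is the elementary estimate for affine $f$ on $[-2r,2r]$: $|f'|\le \frac{C}{r}\cdot\frac{1}{4r}\int|f|$. At a star vertex we use the same estimate on each arm separately, with $m(2B)\asymp r$.

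\emph{Large balls, $r\ge1$.} Choose $n$ with $\rho^{-n}\asymp r$, so that $B$ is covered by a bounded number of $n$-skeletons contained in $2B$. This uses the finite ramification and the self-similar structure of $G^{(n)}$. On each such skeleton $W$, $u$ is determined by its values on the scaled boundary $\rho^{-n}V_0$ (translated) via harmonic extension: inside, $u$ is the cable-system version of the harmonic extension, with the harmonic structure $(H,\mathbf r)$ and $r_i=\rho^{\beta-\alpha}$. The key input is the oscillation inequality \eqref{eq_OSC} from Section~\ref{sec:proof_grh}, applied after rescaling. Iterating the harmonic extension matrices down $n$ levels, the difference of $u$ across any single edge inside $W$ is bounded by $C\rho^{n(\beta-\alpha)}\operatorname{osc}_{\partial W}u$. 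This is the decay of energy $\mathcal E_m(u)\le r^{m}\mathcal E_0$ per cell, converted to edge differences. Since the edge-level energy of a cell is comparable to the squared edge difference, we get $|u(p)-u(q)|^2\lesssim \rho^{n(\beta-\alpha)}\mathcal E_{W}(u)\lesssim \rho^{n(\beta-\alpha)}\cdot \rho^{n(\beta-\alpha)}(\operatorname{osc}u)^2$. This gives the factor $r^{-(\beta-\alpha)}$.

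It remains to bound $\operatorname{osc}_{\partial W}u$, or $\sup_{B'}|u|$, by $\dashint_{2B}|u|\,dm$. The plan is a mean-value/maximum principle argument. Because $u$ is harmonic on the bounded skeleton, it is a convex combination, via harmonic measure, of boundary values at any slightly larger scale. Averaging over a family of intermediate skeletons between $B$ and $2B$ (scales $\rho^{-n}$ to $\rho^{-n-k_0}$ with fixed $k_0$) turns $\sup|u|$ into an average of $|u|$ over a region of volume $\asymp r^\alpha\asymp m(2B)$. An alternative is a Harnack-type bound applied to $u^\pm$ using the p.c.f. finite boundary: the values at boundary vertices of a cell control $u$ on a neighbourhood, and the measure of a neighbourhood of each vertex within one cable-length is bounded below.

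I expect this step to be the main obstacle: bounding the pointwise values at the finitely many boundary vertices by an $L^1$ average, since $u$ need not have a sign. The approach I would try first is to bound instead the oscillation on an inner skeleton by the $L^1$ norm. Harmonic extension is linear and finite-dimensional, $\ell(V_0)\to$ functions on the skeleton. So the map from boundary values to $u|_W$ is injective modulo nothing, and $\|u\|_{L^1(W)}\ge c\,\rho^{-n\alpha}\max_{V}|u|$ by a compactness/norm-equivalence argument. This must hold uniformly in $n$, which self-similarity should give, since averages over $n$-skeletons rescale.
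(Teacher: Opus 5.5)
\textbf{Verdict.} Your reduction is the paper's, but the one step you flag is not proved, and the justification you give for it does not work. It can be repaired with tools you already invoke.

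\textbf{What agrees with the paper.} On a cable $|\nabla u|=|u(p)-u(q)|$. Applying \eqref{eq_OSC} on a skeleton $W$ of size $\rho^{-n}\asymp r$ gives $|u(p)-u(q)|\le C\rho^{n(\beta-\alpha)}\operatorname{osc}_{\partial W}u$. Everything then reduces to $\max_{\partial W}|u|\le C\dashint|u|\,dm$. Your small-ball estimate is fine. So is choosing $W\subseteq 2B$ directly rather than working in $lB$ and covering; for the bounded range of $r\ge 1$ where no $n\ge 0$ fits, the one-cable estimate still applies. Take the edge bound straight from \eqref{eq_OSC}. Your energy heuristic does not give it: comparability of one edge's energy with its squared difference only yields $|u(p)-u(q)|^2\le\mathcal E_W(u)$, which is half the exponent.

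\textbf{Where it fails.} The averaging plan cannot work. On a finitely ramified skeleton, harmonic measure is carried by the $N$ boundary vertices. Averaging over $k_0+1$ scales therefore still expresses $u(x)$ through finitely many point values, never through an average against $m$. Harnack also cannot be applied to $u^\pm$, which are not harmonic.

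The norm-equivalence plan is correct for each fixed $n$, but there it is trivial. The uniformity in $n$ is the whole content, and ``averages over $n$-skeletons rescale'' is not an argument. After scaling by $\rho^n$, the map $h\mapsto\dashint_W|u|\,dm$ becomes an average over the $M^n$ level-$n$ pieces. This is a different norm on $\mathbb{R}^N$ for each $n$, and no identity relates them.

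\textbf{A repair.} Let $v$ be the harmonic function on $K$ with $v|_{V_0}=h:=(u(q_i))_i$. On the piece of $W$ indexed by $w\in W_n$, $u$ lies between the minimum and maximum of $v$ on $F_wV_0$, and so does $v$ on $F_wK$. All pieces have equal $m$-mass and $\mu(F_wK)=M^{-n}$, so \eqref{eq_OSC} gives
\[
\Bigl|\dashint_W|u|\,dm-\int_K|v|\,d\mu\Bigr|\le C\rho^{n(\beta-\alpha)}\max_i|h_i| .
\]
The map $h\mapsto\int_K|v|\,d\mu$ is a norm on $\mathbb{R}^N$, since $v$ is continuous and $\mu$ has full support. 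Hence $\dashint_W|u|\,dm\ge c\max_i|h_i|$ for $n\ge n_0$, and the finitely many $n<n_0$ are covered by norm equivalence.

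\textbf{How the paper does it.} The paper avoids compactness with a quantitative Harnack-type step applied to $u$ itself. Suppose $u(q_1)=\max_i|u(q_i)|>0$. Proposition~\ref{lem:minmatrix} gives $T_0$ with $a^{(1),T_0}_{j1}\ge\frac56$. At the boundary vertices of the $(k-T_0)$-sub-skeleton $W_0\subseteq W$ at $q_1$, using $u(q_i)\ge -u(q_1)$, one gets $u(t_j)\ge(2a^{(1),T_0}_{j1}-1)u(q_1)\ge\frac23u(q_1)$. By the maximum principle $u\ge\frac23u(q_1)$ on $W_0$, so $u(q_1)\le\frac32\dashint_{W_0}|u|\,dm$, with $m(W_0)\asymp m(lB)$.

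Positivity is thus forced on a fixed-depth sub-skeleton. This resolves the sign problem and is scale-uniform with explicit constants by construction. The repaired version of your route gives only a non-explicit constant, but is otherwise equally valid.
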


Next, we will introduce the unbounded p.c.f. self-similar sets. the Let $K_0 = K$, and for $n \ge 1$ define $K_n = \rho^{-n} K$. Set $K_\infty = \bigcup_{n \ge 0} K_n$. We call $K_n(n < \infty)$ a \emph{bounded} p.c.f. self-similar set, and $K_\infty$ the corresponding unbounded p.c.f. self-similar set.
Define $V_\infty = \bigcup_{n \ge 0} \rho^{-n} V_*$. Then $V_\infty$ is a proper subset of $K_\infty$, and its closure in the Euclidean metric equals $K_\infty$.
Let $d_\infty$ be the geodesic metric (length metric) on $K_\infty$ induced by the Euclidean distance $d$. It is known that $(K_\infty, d_\infty)$ is a complete, separable, geodesic metric space.

In what follows, the symbol $K_n$ will be used with $n \in [0,\infty]$; thus $K_n$ may denote either a bounded set (when $n \in [0,\infty))$ ) or the unbounded set $K_\infty$ (when $n = \infty$).

We say that the H\"older regularity condition \eqref{eq_HR} holds if there exists $C > 0$ such that for any ball $B = B(x_0, r)$, for any $u \in \mathcal{F}$ harmonic in $2B$, and for $m$-a.e. $x, y \in B$ with $x \neq y$,
\begin{align}
 \frac{|u(x) - u(y)|}{d(x,y)^{\beta-\alpha}}\leq  \frac{C}{r^{\beta-\alpha}}\dashint_{2B} |u| \, dm.\label{eq_HR}\tag{HR}
\end{align}

\begin{theorem}\label{corol_GRH}\rm
	Let $K_n (0\leq n\leq\infty)$ be the bounded and unbounded p.c.f. self-similar set induced by by p.c.f. self-similar set $K$. We have that the H\"older regularity condition \eqref{eq_HR} holds on $K_n$.
\end{theorem}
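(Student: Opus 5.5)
We will prove \eqref{eq_HR} by combining an oscillation decay estimate for harmonic functions, coming from the harmonic extension on cells, with an $L^\infty$--$L^1$ mean value bound. The key scaling is $r_i=\rho^{\beta-\alpha}$. If $h$ is harmonic on an $m$-cell $F_wK$, then $h\circ F_w$ is harmonic on $K$, and $h\circ F_w$ is determined by its boundary values on $V_0$ through the harmonic extension matrices $A_i$.

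\textbf{Step 1: oscillation contraction.} For $h$ harmonic on $K$ we will show
\[
\operatorname{osc}_{F_iK} h\le \lambda \operatorname{osc}_K h,
\qquad \lambda<1 .
\]
We will use the energy instead of the matrices $A_i$. Write $R$ for the resistance metric, so that $|h(x)-h(y)|^2\le R(x,y)\mathcal{E}(h)$. Self-similarity gives
\[
\mathcal{E}(h\circ F_w)\le r^{|w|}\mathcal{E}(h),
\]
and $\mathcal{E}(h)\asymp \operatorname{osc}_{V_0}h^2$ for harmonic $h$, since $\mathcal{E}_0$ is a norm on functions modulo constants on the finite set $V_0$. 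Iterating,
\[
\operatorname{osc}_{F_wK}h\le C\, r^{|w|/2}\operatorname{osc}_K h .
\]
This is only an exponent $(\beta-\alpha)/2$. To reach the exponent $\beta-\alpha$ in \eqref{eq_HR}, we will instead use the eigenvalue structure of $A_i$: the maximal nontrivial eigenvalue of $A_i$ is $r_i$ (the energy identity $\mathcal{E}(h\circ F_i)=r_i\mathcal{E}(h)$ summed over $i$). I expect this step to be the \emph{main obstacle}. The naive bound $\|A_w\|\le$ (something) $\cdot r^{|w|}$ on the nonconstant part needs non-degeneracy, and this is presumably what Section~\ref{sec:proof_grh}'s \eqref{eq_OSC} supplies. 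I will take \eqref{eq_OSC} in the form
\[
\operatorname{osc}_{F_wK}h\le C\rho^{(\beta-\alpha)|w|}\operatorname{osc}_{V_0}h
\]
for $h$ harmonic in a neighbourhood of the cell.

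\textbf{Step 2: from a ball to a chain of cells.} Given $B=B(x_0,r)$ with $u$ harmonic in $2B$, take $m$ with $\rho^m\asymp r$. Cover $B$ by the boundedly many $m$-cells meeting it; by p.c.f. finiteness these lie in $2B$ (after adjusting constants, and using the geodesic metric $d_\infty$ on $K_\infty$). For $x,y\in B$ with $\rho^{k+1}\le d(x,y)<\rho^k$, the points $x,y$ lie in adjacent $k$-cells inside one $m$-cell neighbourhood. Applying Step 1 with $|w|=k-m$ gives
\[
|u(x)-u(y)|\le C(\rho^{k-m})^{\beta-\alpha}\operatorname{osc}_{\text{$m$-cells}}u \asymp C\left(\frac{d(x,y)}{r}\right)^{\beta-\alpha}\operatorname{osc}.
\]

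\textbf{Step 3: mean value bound.} We need
\[
\operatorname{osc}_{\text{$m$-cells near }B}u\le 2\sup|u|\le C\dashint_{2B}|u|\,dm .
\]
Because $u$ is harmonic on each cell, it is a linear image of its boundary values $u|_{F_wV_0}$. The map sending $u|_{F_wV_0}$ to $\int_{F_wK}|u|\,d\mu/\mu(F_wK)$ is a norm on the finite-dimensional space $\ell(V_0)$, hence comparable to $\max|u|_{F_wV_0}|$. Using scale invariance under $F_w$ and Ahlfors regularity ($\mu(F_wK)\asymp r^\alpha\asymp\mu(2B)$), we get $\max_{F_wK}|u|\le C\dashint_{2B}|u|$.

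Finally, both $K_n$ with $n<\infty$ and $K_\infty$ are unions of scaled copies of $K$, and the argument is local at scale $r$, so constants are uniform. For large $r$ in $K_\infty$ we apply the same argument in $\rho^{-j}K$ with $j$ large enough. This proves \eqref{eq_HR}.
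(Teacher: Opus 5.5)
Your proposal is correct and follows essentially the same route as the paper: apply \eqref{eq_OSC} (Lemma~\ref{th:osc-bound}) on a cell of size comparable to $r$ that contains the two adjacent small cells given by Condition (H), which your Step~2 uses implicitly; then bound the oscillation on that cell by $\dashint_{2B}|u|\,dm$, and treat $d(x,y)\gtrsim r$ separately with the sup bound. Your Step~3 argument is a small improvement, since the paper only asserts the $L^\infty$--$L^1$ bound: pull back by $F_w$ and use that ``boundary values $\mapsto$ $L^1(\mu)$-norm of their harmonic extension'' is a norm on $\ell(V_0)$. Taking cells of diameter less than $r$ also lets you skip the paper's $l_0B$-then-rescale step, and the eigenvalue remarks in Step~1 can be dropped, since eigenvalues of single $A_i$ would not control the products $A_w$ anyway.
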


The key point of proof is that we combine \cite[Appendix]{Tep00} by Teplyaev and \cite[Theorem 8.3]{Str00} by Strichartz to give the oscillation inequality (\ref{eq_OSC}), which is important to cover classical
Sierpi\'nski gasket and Vicsek set instead of \cite[Theorem 8.3]{Str00} or \cite[Example 5.6]{THP14} only for gasket-type fractals.

\begin{corollary}\rm
In an unbounded p.c.f. self-similar set, under volume regular condition  and upper heat kernel estimate, The H\"older regularity condition \eqref{eq_HR} implies
the heat kernel H\"older continuity with or without exponential terms (also see \cite[Theorem 2.1]{GY26}). 
\end{corollary}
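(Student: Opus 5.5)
The plan is to deduce the statement from \cite[Theorem 2.1]{GY26} rather than re-derive heat kernel regularity from scratch. That theorem says the following. On an unbounded metric measure space satisfying a (slow) volume regular condition, a strongly recurrent condition and the upper heat kernel estimate, the H\"older regularity condition \eqref{eq_HR} is equivalent to H\"older continuity of the heat kernel, both with and without the exponential factor. So the work consists of placing $(K_\infty,d_\infty,\mu)$ with its self-similar Dirichlet form inside the framework of \cite{GY26}, and then invoking the implication ``\eqref{eq_HR} $\Rightarrow$ heat kernel H\"older continuity''. Theorem \ref{corol_GRH} with $n=\infty$ will supply \eqref{eq_HR}.

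The first step is to check the standing assumptions of \cite{GY26} for $K_\infty$.
\begin{itemize}
\item \emph{Volume regularity.} The scaling $K_n=\rho^{-n}K$ and the $\alpha$-regularity of $\mu$ on $K$ give $\mu(B(x,r))\asymp r^\alpha$ for all $r>0$, so the volume function is $V(r)\asymp r^\alpha$. This is assumed anyway in the statement.
\item \emph{Strong recurrence.} This is the condition $\beta>\alpha$. It follows from the regularity of the harmonic structure: $r_i=\rho^{\beta-\alpha}<1$ with $\rho<1$ forces $\beta-\alpha>0$. Equivalently, the resistance between $x$ and the complement of $B(x,R)$ grows like $R^{\beta-\alpha}$, which is exactly the exponent appearing in \eqref{eq_HR}.
\item \emph{Upper heat kernel estimate.} This is the sub-Gaussian upper bound in \ref{eqn_HK}, which is a hypothesis of the corollary.
\end{itemize}
I also need the Dirichlet form on $K_\infty$, obtained by gluing the rescaled forms $\rho^{-n(\beta-\alpha)}\mathcal{E}(\cdot\circ\rho^{n})$ on $K_n$, to be a strongly local regular Dirichlet form on $L^2(K_\infty,\mu)$. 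This follows from self-similarity and the monotonicity of $\mathcal{E}_m$.

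The second step is to confirm that \eqref{eq_HR} as proved in Theorem \ref{corol_GRH} matches the form required in \cite{GY26}. It must hold for \emph{all} balls $B(x_0,r)$ with $r\in(0,\infty)$ in the metric $d_\infty$, with a uniform constant, and for all $u\in\mathcal{F}$ harmonic in $2B$. Balls of small radius reduce to cells of $K$ by self-similarity. Large balls are contained in some $K_n$ and are handled by rescaling by $\rho^{n}$. The constant is scale invariant because both sides of \eqref{eq_HR} scale by the same factor $\rho^{n(\beta-\alpha)}$ under the energy rescaling.

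The final step is to apply \cite[Theorem 2.1]{GY26}. The mechanism inside that theorem is to write $p_t(x,\cdot)$, or its resolvent or time-derivative counterparts, as a sum of a harmonic part and an error controlled by the upper heat kernel bound. One then applies \eqref{eq_HR} on balls of radius $t^{1/\beta}$ to obtain
\[
|p_t(x,y)-p_t(x,z)|\le \frac{C}{V(t^{1/\beta})}\Bigl(\frac{d(y,z)}{t^{1/\beta}}\Bigr)^{\beta-\alpha},
\]
together with the version carrying the exponential factor.

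The main obstacle I expect is the compatibility check in the first step, rather than the final deduction. The issue is that \cite{GY26} is formulated in terms of the intrinsic length metric and a specific notion of ``harmonic in $2B$'' for functions in $\mathcal{F}_{loc}$. I must verify that the geodesic metric $d_\infty$ is bi-Lipschitz to the Euclidean one on $K_\infty$, which holds for connected p.c.f. sets with a uniform chain condition. I must also verify that harmonicity in the sense of Theorem \ref{corol_GRH}, meaning minimal energy with respect to the cell structure, coincides with the weak harmonicity used in \cite{GY26}. If either identification fails, I would instead restate \eqref{eq_HR} for $d_\infty$-balls directly, using the fact that every $d_\infty$-ball is sandwiched between a bounded number of cells of comparable size.
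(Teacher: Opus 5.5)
Your proposal matches the paper, which states this corollary without any further argument: (HR) on $K_\infty$ from Theorem \ref{corol_GRH} (case $n=\infty$) is fed into \cite[Theorem 2.1]{GY26}, with volume regularity and the upper heat kernel bound as hypotheses and strong recurrence coming from $\beta>\alpha$. Your compatibility checks (metric, notion of harmonicity, the glued form on $K_\infty$) go beyond what the paper writes; the one slip is that the form on $K_n$, written via the pulled-back function $f(\rho^{-n}\,\cdot)$ on $K$, should carry the factor $r^n=\rho^{n(\beta-\alpha)}$ rather than $\rho^{-n(\beta-\alpha)}$, since the energy of a fixed profile decreases as the scale grows.
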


\begin{remark}
The result \eqref{eq_HR} in unbounded p.c.f. self-similar set have proved in  \cite{GY26} by two-sided heat kernel estimates and resistance estimates, However, Theorem \ref{corol_GRH} is developed intrinsically from the harmonic structure.
\end{remark}

\begin{remark}
Actually, we focuses on `homogeneous' p.c.f. self-similar sets, where `homogeneous' means that constant contraction ratio $\rho$. A similar approach is adapted to establish the corresponding result for the inhomogeneous case.
\end{remark}

\section{The oscillation inequality of harmonic function}
\label{sec:proof_grh}
The main purpose of this section is to prove an oscillation inequality of harmonic function on the p.c.f self-similar set. 
The proof is adapted from \cite[Appendix]{Tep00} and \cite[Theorem 8.3]{Str00}. For completeness, we present the proof in this paper.

\begin{lemma}(\cite[Lemma A.1.]{Tep00})\rm
	\label{lemma:current-equality}
	If $u$ is a harmonic function on $V_m$, and $u$ restricted to $V_0$ is either $0$ or $1$, then the following inequality holds:
	\begin{equation}
		H_{x,y}^{w}|u(x)-u(y)|\leq \mathcal{E}_m(u,u).\label{A.3}
	\end{equation}
	for any $w \in W_m$, $x,y\in V_w$.
\end{lemma}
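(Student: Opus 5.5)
The idea is to read $u$ as an electric potential and use conservation of current. Since the harmonic structure is regular, the energy $\mathcal{E}_m(u,u)$ of the $V_m$-harmonic function $u$ (boundary data $0$ or $1$ on $V_0$) equals the total current flowing from the set $A=\{p\in V_0: u(p)=1\}$ to the set $B=\{p\in V_0:u(p)=0\}$. We may assume both sets are nonempty; otherwise $u$ is constant and both sides of \eqref{A.3} vanish. By the maximum principle on the finite network $V_m$, with conductances $H^w_{x,y}$ summed over the cells containing both $x$ and $y$, we have $0\le u\le 1$ on $V_m$.

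Fix $w\in W_m$ and $x,y\in V_w$, and suppose without loss of generality that $u(x)>u(y)$. Set $t=u(y)$ and $S=\{z\in V_m: u(z)>t\}$. Then $x\in S$, $y\notin S$, $A\subseteq S$ (as $t<1$), and $S\cap B=\emptyset$ (as $t\ge 0$).

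The key step is to compute the current across the cut $(S,S^c)$. For every $z\in V_m\setminus V_0$, harmonicity gives
\[
\sum_{z'} c_{zz'}\,(u(z)-u(z'))=0 ,
\]
where $c_{zz'}=\sum_{w'} H^{w'}_{z,z'}$. Summing this identity over $z\in S\setminus V_0$, and adding the boundary terms at the points of $A\subseteq S$, shows that the net current $\sum_{z\in S,\,z'\notin S} c_{zz'}(u(z)-u(z'))$ equals the total current $I$ leaving $A$. Green's identity then gives $I=\mathcal{E}_m(u,u)$, because $u=1$ on $A$ and $u=0$ on $B$.

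Every term in this cut sum is nonnegative, since $u(z)>t\ge u(z')$ whenever $z\in S$ and $z'\notin S$. The single term with $z=x$, $z'=y$ is therefore bounded by the whole sum:
\[
H^w_{x,y}(u(x)-u(y))\le c_{xy}(u(x)-u(y))\le I=\mathcal{E}_m(u,u).
\]

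The main obstacle is the bookkeeping in the current-conservation step. The equation $\Delta_m u=0$ holds only at interior points, while on $V_0$ one must track the boundary flux. This is handled by noting that points of $V_0\cap S$ lie in $A$, and that the sum of fluxes out of $A$ is exactly $\mathcal{E}_m(u,u)$ by the discrete Gauss–Green formula $\mathcal{E}_m(u,u)=-\sum_{p\in V_0}u(p)(\Delta_m u)(p)$.
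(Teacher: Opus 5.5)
Your proposal is correct and is essentially the paper's own argument. You cut the network along a level set of $u$ separating $x$ from $y$, and use Kirchhoff's law at interior vertices (Proposition~\ref{lemma:mvp}) to show the current across the cut equals the total boundary current, which is $\mathcal{E}_m(u,u)$. You then bound the single nonnegative term $H^w_{x,y}|u(x)-u(y)|$ by the whole cut sum. The differences are cosmetic: you cut at $\{u>u(y)\}$ and get the flux out of the $1$-boundary from the discrete Gauss--Green formula, whereas the paper cuts at $\{u\ge a\}$ and uses the flux into the $0$-boundary (Proposition~\ref{lemma:total_current}); your explicit use of the maximum principle for $0\le u\le 1$ also states a step the paper leaves implicit.
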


\begin{remark}
By \cite[Remark A.2]{Tep00}, we know that inequality \eqref{A.3} has a clear interpretation in terms of electrical networks. Consider a network in which nodes $ x $ and $ y $ are connected by a resistor of conductance $ H_{x, y}^w $. When each boundary point is held at electric potential $ 0 $ or $1$, the expression $\mathcal{E}_m(u, u) $ equals the total current flowing through the network since a formula $ E = I U $ (power equals to current times potential difference). Therefore inequality \eqref{A.3}  states that the current through any single resistor does not exceed the total network current.
\end{remark}

Before that, we give the proof of two necessary proposition as follows.

\begin{proposition}\rm
	\label{lemma:mvp}
	If $u$ is a harmonic function on $V_m$, then for any non-boundary point $x$, that is
	$x\in V_m \backslash V_0$, we have
	\begin{equation}
		\label{mvp}
		\sum_{w \in W_m:x\in V_w \backslash V_0} \sum_{ y \in V_w} H_{xy}^{w} (u(x)-u(y))=0
	\end{equation}
\end{proposition}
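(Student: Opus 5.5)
The claim is the discrete mean value property: $u$ is harmonic on $V_m$ in the sense that it minimizes $\mathcal{E}_m$ among all functions on $V_m$ with the same values on $V_0$. I would derive \eqref{mvp} as the Euler--Lagrange equation of this minimization.

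\textbf{Step 1: set up the variation.} Fix a non-boundary point $x\in V_m\setminus V_0$. Let $\delta_x$ be the indicator of $\{x\}$ on $V_m$. Since $x\notin V_0$, the function $u+t\delta_x$ has the same boundary values as $u$ for every $t\in\mathbb{R}$. Hence, by minimality,
\[
\mathcal{E}_m(u+t\delta_x)\ge \mathcal{E}_m(u)\quad\text{for all } t\in\mathbb{R}.
\]
The left side is a quadratic polynomial in $t$:
\[
\mathcal{E}_m(u)+2t\,\mathcal{E}_m(u,\delta_x)+t^2\mathcal{E}_m(\delta_x).
\]
A quadratic that attains its minimum at $t=0$ has zero linear coefficient, so
\[
\mathcal{E}_m(u,\delta_x)=0.
\]

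\textbf{Step 2: compute $\mathcal{E}_m(u,\delta_x)$.} Polarize the expression for $\mathcal{E}_m$:
\[
\mathcal{E}_m(f,g)=\frac12\sum_{w\in W_m}\sum_{y,z\in V_w}H^w_{yz}\,(f(y)-f(z))(g(y)-g(z)).
\]
Take $g=\delta_x$. Only cells $w$ with $x\in V_w$ contribute, and within such a cell only the pairs with $y=x$ or $z=x$. Using the symmetry $H^w_{yz}=H^w_{zy}$, the two ordered pair types give equal terms, which cancels the factor $\tfrac12$. The diagonal terms vanish because of the factor $(u(x)-u(x))$. This yields
\[
\mathcal{E}_m(u,\delta_x)=\sum_{w\in W_m:\,x\in V_w}\ \sum_{y\in V_w}H^w_{xy}\,(u(x)-u(y)).
\]
Since $x\notin V_0$, the index set $\{w: x\in V_w\}$ coincides with the set written in \eqref{mvp} as $x\in V_w\setminus V_0$. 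Combining with Step 1 gives \eqref{mvp}.

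\textbf{Main obstacle.} The only delicate point is the bookkeeping in Step 2. One must check that the polarized formula is consistent with the definition $\mathcal{E}_m(f,g)=\sum_w r_w^{-1}\mathcal{E}_0(f\circ F_w,g\circ F_w)$, where $\mathcal{E}_0(f,g)=-\langle f,Hg\rangle$. This consistency uses $\sum_q H_{pq}=0$, which holds because $H$ annihilates constants. One should also use that $F_w$ is injective, so each $y\in V_w$ corresponds to a unique point of $V_0$ and no term is counted twice within a cell.
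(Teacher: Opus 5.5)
Your proposal is correct and follows the same route as the paper: both obtain \eqref{mvp} as the first-order condition for $u$ minimizing $\mathcal{E}_m$ with its values on $V_0$ held fixed. The paper sets $\partial\mathcal{E}_m(f,f)/\partial f(x)=0$, while you use the equivalent variation $u+t\delta_x$; your version is slightly more explicit in checking that this variation preserves boundary values and that the polarized form agrees with $-\langle f,Hg\rangle$ via $\sum_q H_{pq}=0$.
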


The behaviour of the currents in a network is described by Kirchhoff's current law, which states that if $x \in V_m \backslash V_0$, and no external current flows into or out of $x$ (see \cite[Lemma 2.3]{Bar17}).

\begin{proof}
	If $u$ is harmonic on $V_m$, it minimizes the energy functional $\mathcal{E}_m$ with the boundary condition $u|_{V_0}$. That is,
	\begin{align*}
		\mathcal{E}_m(u,u) &= \frac{1}{2} \sum_{w \in W_m} \sum_{x,y \in V_w} H_{xy}^{w} (u(x) - u(y))^2 \\
		&= \min \left\{ \mathcal{E}_m(f,f) : f \big|_{V_0} = u|_{V_0} \right\} \\
		&= \min \left\{ \frac{1}{2} \sum_{w \in W_m} \sum_{x,y \in V_w} H_{xy}^{w} (f(x) - f(y))^2 : f \big|_{V_0} = u|_{V_0} \right\}.
	\end{align*}
	
	Note that the minimum is attained at critical point, we have
	\begin{align*}
		\frac{\partial \mathcal{E}_m(f,f)}{\partial f(x)} \bigg|_{u|_{V_0}=f|_{V_0}} =
		2 \sum_{w \in W_m:x\in V_w \backslash V_0} \sum_{ y \in V_w} H_{xy}^{w} (f(x) - f(y)) = 0,
	\end{align*}
	since 
	\begin{align*}
		\frac{\partial \mathcal{E}_m(f,f)}{\partial f(x)} 
		&= \frac{1}{2} \sum_{w \in W_m:x\in V_w \backslash V_0} \sum_{ y \in V_w} \left[ H_{xy}^{w} \cdot 2(f(x)-f(y)) + H_{yx}^{w} \cdot 2(f(x)-f(y)) \right] \\
		&= 2 \sum_{w \in W_m:x\in V_w \backslash V_0} \sum_{ y \in V_w} H_{xy}^{w} (f(x)-f(y)),
	\end{align*}
	where we used the symmetry $H_{xy}^{w} = H_{yx}^{w}$.
\end{proof}

\begin{proposition}\rm
	\label{lemma:total_current}
	If $u$ is a harmonic function on $V_m$, 
	and $u$ restricted to $V_0$ is either $0$ or $1$, then we have
	\begin{equation}
		\label{equation:electric}
 	\sum_{ x \in V_0:u(x)=0}\sum_{w \in W_m:x\in V_w}\sum_{ y \in V_w} H_{x,y}^w u(y) = \mathcal{E}_m(u,u).
	\end{equation}
\end{proposition}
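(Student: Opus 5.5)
The plan is a discrete Green identity: expand $\mathcal{E}_m(u,u)$ by summation by parts, and use Proposition \ref{lemma:mvp} to kill the interior vertices, so that only boundary vertices survive. Throughout, write $H^w_{x,x}$ for the diagonal entries of the rescaled Laplace matrix, so that $\sum_{y\in V_w}H^w_{x,y}=0$ for each $w$ and $x\in V_w$. This is the statement $H\mathbf 1=0$, which holds because $H$ annihilates constants.

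\textbf{Step 1: summation by parts.} Expanding the square and using the symmetry $H^w_{x,y}=H^w_{y,x}$ gives
\begin{align*}
\mathcal{E}_m(u,u)=\frac12\sum_{w\in W_m}\sum_{x,y\in V_w}H^w_{x,y}(u(x)-u(y))^2=\sum_{w\in W_m}\sum_{x,y\in V_w}H^w_{x,y}\,u(x)\,(u(x)-u(y)).
\end{align*}
Diagonal terms vanish on both sides, so including them is harmless.

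\textbf{Step 2: reorganize by vertex.} Regroup the right-hand side as $\sum_{x\in V_m}u(x)\sum_{w:\,x\in V_w}\sum_{y\in V_w}H^w_{x,y}(u(x)-u(y))$. For $x\in V_m\setminus V_0$ the inner double sum is zero by Proposition \ref{lemma:mvp}. I am reading the index set there as all $w$ with $x\in V_w$, which is the same thing since $x\notin V_0$.

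For $x\in V_0$ with $u(x)=0$, the prefactor $u(x)$ kills the term. What remains is
\begin{align*}
\mathcal{E}_m(u,u)=\sum_{x\in V_0:\,u(x)=1}\ \sum_{w:\,x\in V_w}\sum_{y\in V_w}H^w_{x,y}(1-u(y)).
\end{align*}

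\textbf{Step 3: pass to the zero-potential boundary points.} The right-hand side of \eqref{equation:electric} involves the boundary points with $u(x)=0$, so I repeat the computation with $1-u$ in place of $u$. The function $1-u$ is also harmonic on $V_m$, and $\mathcal{E}_m(1-u,1-u)=\mathcal{E}_m(u,u)$.

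Applying Step 2 to $1-u$, the surviving boundary points are exactly those with $u(x)=0$. This gives
\begin{align*}
\mathcal{E}_m(u,u)=\sum_{x\in V_0:\,u(x)=0}\ \sum_{w:\,x\in V_w}\sum_{y\in V_w}H^w_{x,y}\bigl(1-u(x)-(1-u(y))\bigr)=\sum_{x\in V_0:\,u(x)=0}\ \sum_{w}\sum_{y\in V_w}H^w_{x,y}(u(y)-u(x)).
\end{align*}
Since $u(x)=0$ for these $x$, the summand is just $H^w_{x,y}u(y)$, which is \eqref{equation:electric}.

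\textbf{Main obstacle.} The delicate point is bookkeeping rather than analysis. One has to fix the convention that $H^w_{x,y}$ includes the diagonal $H^w_{x,x}=-\sum_{y\neq x}H^w_{x,y}$, and check it is consistent with how \eqref{equation:electric} is written. With that convention, the $y=x$ term contributes $H^w_{x,x}u(x)=0$ anyway, because $u(x)=0$.

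If instead the diagonal is excluded, the same identity holds. In that reading the diagonal terms drop out of every sum, and the zero-row-sum property is used only implicitly through Proposition \ref{lemma:mvp}.
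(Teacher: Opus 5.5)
Your proof is correct and is essentially the paper's argument. In both, the interior harmonicity condition reduces the energy of the harmonic function to a boundary flux, and the symmetry $u\mapsto 1-u$ then moves the sum from the boundary points at potential $1$ to those at potential $0$. Your summation-by-parts step is a cleaner version of the paper's computation, which writes $\mathcal{E}_m(f,f)=f_1^TBf_1-2C^Tf_1+A$ and evaluates it at the solution of $Bu_1'=C$ (the same condition as Proposition~\ref{lemma:mvp}); it also handles arbitrary $0/1$ boundary data at once, rather than only the case $u(p_1)=1$ followed by ``similarly''.
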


\begin{proof}
	Let $V_0 = \{p_1, p_2, \dots, p_N\}$ be the boundary vertices. Without loss of generality, 
	assume $u(p_1) = 1$ and $u(p_i) = 0$ for $i = 2, 3, \dots, N$.
	
	For any function $f \in l(V_m)$ with $f|_{V_0} = u|_{V_0}$, we have 

	\begin{align*}
			\mathcal{E}_m(f,f)=&	
			\frac{1}{2}\sum_{w \in W_m} \sum_{x,y \in V_w} H_{x,y}^w (f(x)-f(y))^2
			\\
			= &\sum_{w \in W_m:p_1\in V_w} \sum_{y \in V_w} H_{p_1,y}^w (1-f(y))^2 + \sum_{i=2}^N \sum_{w \in W_m:p_i\in V_w} \sum_{y \in V_w\setminus\{p_1\}} H_{p_i,y}^w (f(y))^2 \\+& \frac{1}{2}\sum_{w \in W_m} \sum_{x,y \in V_w\setminus V_0} H_{xy}^w (f(x)-f(y))^2
			\\
			=& \sum_{y \in V_m\setminus V_0} \left( \sum_{w \in W_m:y\in V_w,p_1\in V_w} H_{p_1,y}^w \right) f^2(y)+\sum_{y \in V_m\setminus V_0} \left( \sum_{i=2}^N  \sum_{w \in W_m:y\in V_w,p_i\in V_w} H_{p_i,y}^w \right) f^2(y)\\	+&  \sum_{x \in V_m \setminus V_0}  \sum_{w \in W_m:x\in V_w\setminus V_0}\sum_{ y \in V_w\setminus V_0} H_{x,y}^w f^2(x)
			- \sum_{x \in V_m\setminus V_0} \sum_{y \in V_m\setminus V_0}   \sum_{w \in W_m:x,y\in V_w} H_{x,y}^w f(x) f(y)\\-& 2 \sum_{y \in V_m\setminus V_0} \left(  \sum_{w \in W_m:y\in V_w,p_1\in V_w} H_{p_1,y}^w \right) f(y)+  \left( \sum_{w \in W_m:p_1\in V_w} \sum_{y \in V_w} H_{p_1,y}^w \right) \\	-& 2 \left(  \sum_{w \in W_m:p_1\in V_w } H_{p_1,p_1}^w \right) +  \sum_{w \in W_m:p_1\in V_w}   H_{p_1,p_1}^w\\	
			\\=& f_1^T B f_1 - 2 C ^Tf_1 + A.
	\end{align*}
	where $f_1$ is $f$ with the values on $V_0$ removed. Since $B$ is a diagonally dominant matric and positive definite, the minimizer $u_1'$ of $\mathcal{E}_m(f,f)$ satisfies:
	\begin{align*}
		Bu_1'=C.
	\end{align*}

	Let $u_0$ be $u_1'$ plus the values of $u$ on $V_0$. A straightforward calculation yields
	\begin{align*}
		\mathcal{E}_m(u_0,u_0)= \sum_{w \in W_m:p_1\in V_w} \sum_{y\in V_w} H_{p_1,y}^w (1 - u_0(y)).
	\end{align*}
Note that $u$ is harmonic on $V_m$, we have $u_0=u$ and
	\begin{align*}
		\mathcal{E}_m(u,u)= \sum_{w \in W_m:p_1\in V_w} \sum_{y\in V_w} H_{p_1,y}^w (1 - u(y)).
	\end{align*}

Similarly, if $u$ is harmonic on $V_m$ and $u$ restricted to $V_0$ is either $0$ or $1$, then
	\begin{align*}
	\sum_{ x \in V_0:u(x)=1}\sum_{w \in W_m:x\in V_w}\sum_{ y \in V_w} H_{x,y}^w  (1-u(y)) = \mathcal{E}_m(u,u).
	\end{align*}
By $\mathcal{E}_m(1-u,1-u)=\mathcal{E}_m(u,u)$, we have
	\begin{align*}
		\sum_{ x \in V_0:u(x)=0}\sum_{w \in W_m:x\in V_w}\sum_{ y \in V_w} H_{x,y}^w u(y) = \mathcal{E}_m(u,u),
	\end{align*}
which complete the proof.
\end{proof}
Now we give the proof of Lemma \ref{lemma:current-equality} as follows.
\begin{proof}[Proof of Lemma \ref{lemma:current-equality}]
	Define 
	\begin{align}
		E_a:=\{(x,y,w):w\in W_m,x,y \in V_w,u(x)< a,u(y)\geq a\}.
	\end{align}	
	
	We claim that for any $a\in (0,1]$,
   \begin{align}
   	\sum_{(x,y,w) \in E_a} H_{x,y}^w (u(y)-u(x))=\mathcal{E}_m(u,u).
   \end{align}
   Indeed, by Proposition~\ref{lemma:mvp}, we have
	\begin{align}
		 \sum_{(x,y,w) \in E_a} &H_{x,y}^w (u(y)-u(x)) 
		 =\sum_{w \in W_m} \sum_{\substack{x,y \in V_w \\ u(y) \geq a,u(x)< a}} H_{x,y}^w (u(y)-u(x))\nonumber\\
		 &= \sum_{w \in W_m} \sum_{\substack{x \in V_0\cap V_w, u(x)=0 \\ y \in V_w, u(y) \geq a}} H_{x,y}^w u(y)+ \sum_{w \in W_m} \sum_{\substack{x \in V_w\backslash V_0, 0<u(x)<a\\ y \in V_w, u(y)\geq a}} H_{x,y}^w (u(y)-u(x))\nonumber\\
		 &= \sum_{w \in W_m} \sum_{\substack{x \in V_0\cap V_w, u(x)=0 \\ y \in V_w, u(y) \geq a}} H_{x,y}^w u(y)+ \sum_{w \in W_m} \sum_{\substack{x \in V_w\backslash V_0, 0<u(x)<a\\ y \in V_w, u(y)\textless a}} H_{x,y}^w (u(x)-u(y)).\label{eq3.3.1}
	\end{align}
Based on the symmetry of $x$ and $y$ in $V_w \backslash V_0$, it follows that 
	\begin{align}
			&\sum_{w \in W_m} \sum_{\substack{x \in V_w\backslash V_0, 0<u(x)<a\\ y \in V_w, u(y)< a}} H_{x,y}^w (u(x)-u(y))\nonumber\\
			&=\sum_{w \in W_m} \sum_{\substack{x \in V_w\backslash V_0, 0<u(x)<a\\ y \in V_w \backslash V_0, 0<u(y)< a}} H_{x,y}^w (u(x)-u(y))+\sum_{w \in W_m} \sum_{\substack{x \in V_w\backslash V_0, 0<u(x)<a\\ y \in V_0\cap V_w, u(y)=0}} H_{x,y}^w (u(x)-u(y))\nonumber\\
			&=0+\sum_{w \in W_m} \sum_{\substack{x \in V_w\backslash V_0, 0<u(x)<a\\ y \in V_0\cap V_w, u(y)=0}} H_{x,y}^w u(x).\label{eq3.3.2}
	\end{align}

By Proposition \ref{lemma:total_current}, plugging \eqref{eq3.3.2} into \eqref{eq3.3.1},
we have
	\begin{align*}
	\sum_{(x,y,w) \in E_a} H_{x,y}^w (u(y)-u(x))=\sum_{w \in W_m} \sum_{\substack{x \in V_0, u(x)=0 \\ y \in V_w}} H_{x,y}^w u(y)= \mathcal{E}_m(u,u).
	\end{align*} 
	Since $u(x)< a$ and $u(y)\geq a$, we obtain 
	\begin{align*}
		H_{x,y}^{w}|u(x)-u(y)|\leq \mathcal{E}_m(u,u),
	\end{align*}
which complete the proof.
\end{proof}

   Next, we will consider the estimate of the local energy of harmonic functions. Based on \cite[Theorem 8.3]{Str00}, we obtain the 
   oscillation inequality of harmonic functions.

   We first provide an introduction to the harmonic function space.
   Define the space of harmonic functions by $\mathcal{H}\subset C(K)$, which is $N$-dimensional since any harmonic function is uniquely determined by its boundary values.
	We define the norm of $\mathcal{H}$ by $\|h\|_{\mathcal{H}}^2 = \mathcal{E}(h, h) + (\sum_{x \in V_0} h(x))^2$. 
 For every $i=1, \dots, N$,  the linear map $M_i: \mathcal{H} \to \mathcal{H}$ be defined by $M_i h = h \circ F_i$.
	Define $\widetilde{\mathcal{H}}:=\{(h_1,h_2,\dots,h_{N-1})\in \mathbb{R}^{N-1}:\sum_{j=1}^{N-1}h_j=0\}$.
	
	The following Proposition
	\ref{lem:osc} is adapted by \cite[Theorem 5]{Tep00}, which is a key to prove the oscillation inequality (\ref{eq_OSC}).

\begin{proposition}\rm
	\label{lem:osc}
	There exists a constant $C(V_0,H)$ such that for any harmonic function $u$ on $K$ and for any $w\in W_m$, we have
	\begin{equation}
		\label{equation:osc_energy}
		\mathcal{E}_0(M_w(u|_{V_0}))\leq Cr_w^2\mathcal{E}_0(u|_{V_0}).
	\end{equation}
\end{proposition}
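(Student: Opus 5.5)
By linearity and the triangle inequality for $\mathcal{E}_0^{1/2}$, I would reduce the statement to harmonic functions with boundary values $0$ or $1$, and treat those with the current bound of Lemma~\ref{lemma:current-equality}. A finite-dimensional norm equivalence then returns to general $u$.

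\textbf{Step 1 (the $0$/$1$ case).} Let $h$ be harmonic on $K$ with $h|_{V_0}\in\{0,1\}^{V_0}$, and fix $w\in W_m$. Since $h$ is harmonic, $h|_{V_m}$ is harmonic on $V_m$. By \eqref{eq:variational_problem}, iterated, $\mathcal{E}_m(h,h)=\mathcal{E}_0(h|_{V_0})$. Lemma~\ref{lemma:current-equality} gives, for every $x,y\in V_w$ with $H_{F_w^{-1}x,F_w^{-1}y}>0$,
$$|h(x)-h(y)|\le \frac{\mathcal{E}_0(h|_{V_0})}{H^w_{x,y}}=\frac{r_w\,\mathcal{E}_0(h|_{V_0})}{H_{F_w^{-1}x,F_w^{-1}y}}.$$
Now expand
$$\mathcal{E}_0(M_w(h|_{V_0}))=\tfrac12\sum_{p,q\in V_0}H_{pq}\bigl(h(F_wp)-h(F_wq)\bigr)^2 .$$
Only pairs with $H_{pq}>0$ contribute. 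Inserting the bound above gives
$$\mathcal{E}_0(M_w(h|_{V_0}))\le r_w^2\,\mathcal{E}_0(h|_{V_0})^2\cdot \tfrac12\sum_{H_{pq}>0}H_{pq}^{-1}.$$
Here $\mathcal{E}_0(h|_{V_0})$ ranges over finitely many values, so this reads $\mathcal{E}_0(M_w(h|_{V_0}))^{1/2}\le C_1 r_w$, with $C_1$ depending only on $(V_0,H)$.

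\textbf{Step 2 (general $u$).} For each $p\in V_0$, let $h_p$ be the harmonic function with $h_p|_{V_0}=\mathbf 1_{\{p\}}$. For any constant $c$,
$$u=c+\sum_{p\in V_0}(u(p)-c)\,h_p .$$
The map $M_w$ is linear and preserves constants, and $\mathcal{E}_0$ annihilates constants. Since $\mathcal{E}_0^{1/2}$ is a seminorm, Step~1 gives
$$\mathcal{E}_0(M_w(u|_{V_0}))^{1/2}\le C_1 r_w\sum_{p\in V_0}|u(p)-c|.$$
Take $c=\frac1N\sum_{p}u(p)$. The map $f\mapsto \sum_p |f(p)-\bar f|$ and $\mathcal{E}_0(f)^{1/2}$ are both norms on the space of functions on $V_0$ with mean zero. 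For $\mathcal{E}_0^{1/2}$, definiteness uses the third property of $H$: $Hf=0$ iff $f$ is constant, so $\mathcal{E}_0(f)=0$ forces $f$ to be constant. This space is finite-dimensional (this is where $\widetilde{\mathcal H}$ enters), so $\sum_p|u(p)-\bar u|\le C_2\,\mathcal{E}_0(u|_{V_0})^{1/2}$. Squaring the previous display then gives \eqref{equation:osc_energy} with $C=(C_1C_2)^2$.

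\textbf{Main obstacle.} The step needing most care is Step~1's identification $\mathcal{E}_m(h,h)=\mathcal{E}_0(h|_{V_0})$ for the restriction of a harmonic function. One also has to get the scaling $H^w_{x,y}=r_w^{-1}H_{F_w^{-1}x,F_w^{-1}y}$ right, since this is exactly what produces the factor $r_w^2$: one factor $r_w$ comes from the current bound, and the second comes from squaring it while the factor $H$ in $\mathcal{E}_0$ is only of order one. Finally, Lemma~\ref{lemma:current-equality} must be applied to $h|_{V_m}$, which is legitimate because harmonic extension is consistent across levels.
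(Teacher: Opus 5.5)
Your proof is correct and follows essentially the same route as the paper. The paper also first treats the harmonic functions $u_i$ with $u_i|_{V_0}=\mathbf 1_{\{p_i\}}$ via Lemma~\ref{lemma:current-equality}, using $\mathcal{E}_m(u_i|_{V_m})=\mathcal{E}_0(u_i|_{V_0})$ and $H^w_{x,y}=r_w^{-1}H_{F_w^{-1}x,F_w^{-1}y}$, and then passes to general $u$ by decomposing in a basis of $\widetilde{\mathcal H}$, applying the triangle inequality for $\mathcal{E}_0^{1/2}$, and using equivalence of norms on that finite-dimensional space. The differences are cosmetic: you expand $\mathcal{E}_0(M_w(h|_{V_0}))$ directly, which yields the explicit constant $\tfrac12\sum_{H_{pq}>0}H_{pq}^{-1}$, instead of passing through the auxiliary norm $\|\cdot\|_{\widetilde{\mathcal M}}$; and you decompose over all $N$ indicator functions after subtracting the mean, instead of using the $(N-1)$-element basis $\{\widetilde{u_i}|_{V_0}\}$ together with the norm $\|\cdot\|_{\mathcal L}$.
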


\begin{proof}
Firstly, we prove the result for special cases: for any $i\in \{1,2,\dots,N\}$, assume $u_i$ is harmonic on $K$ with $u_i(p_i)=1$ and $u_i(p_j)=0$ for all $j\neq i$. 

For any $w\in W_m$, $x,y\in V_w$, let $x=F_w(p_k)$, $y=F_w(p_l)$, where $k,l\in \{1,2,\dots,N\}$. By Lemma~\ref{lemma:current-equality}, we have:
	\begin{align*}
		H_{x,y}^w|u_i(x)-u_i(y)| &= \frac{1}{r_w}H_{p_k,p_l}|u_i(F_w(p_k))-u_i(F_w(p_l))| \\
		&\leq \mathcal{E}_m(u_i|_{V_m},u_i|_{V_m}) = \mathcal{E}_0(u_i|_{V_0},u_i|_{V_0}),
	\end{align*}
	where we use the fact that the  $u_i$ is a harmonic function in the second line,
	it implies
	\begin{align*} 
		H_{p_k,p_l}|u_i(F_w(p_k))-u_i(F_w(p_l))|\leq r_w \mathcal{E}_0(u_i|_{V_0},u_i|_{V_0}).
	\end{align*}
	
	 On $\widetilde{\mathcal{H}}$, define $\|h\|_{\widetilde{\mathcal{H}}}^2=\mathcal{E}_0(h,h)$ and $\|h\|_{\widetilde{\mathcal{M}}}=\max_{1\leq m<j\leq N}H_{p_m,p_j}|h_m-h_j|$. Since $\mathcal{H}$ is finite-dimensional, $\|\cdot\|_{\widetilde{\mathcal{H}}}$ and $\|\cdot\|_{\widetilde{\mathcal{M}}}$ are equivalent norms. Thus, there exist constants $C_1:=C_1(V_0,H)>0$ and $C_2:=C_2(V_0,H)>0$ such that for any $h\in \widetilde{\mathcal{H}}$, we have $C_1\|h\|_{\widetilde{\mathcal{M}}}\leq \|h\|_{\widetilde{\mathcal{H}}}\leq C_2\|h\|_{\widetilde{\mathcal{M}}}$.
	
	For any harmonic function $u\in \mathcal{H}$ on $K$, define $\widetilde{u}=u-\frac{\sum_{i=1}^{N}u(p_i)}{N}$. For any $h\in\mathbb{R}^N$, when no confusion arises, similarly define $\widetilde{h}=(h_i-\frac{\sum_{j=1}^{N}h_j}{N})_{i=1,2,...,N}$. Then $\widetilde{u}|_{V_0},\widetilde{h}\in \widetilde{\mathcal{H}}$ and $\widetilde{u}$ is also a harmonic function, and $\sum_{i=1}^{N}\widetilde{u}(p_i)=0$, $\mathcal{E}_0(\widetilde{u}|_{V_0})=\mathcal{E}_0(u|_{V_0})$, and $\widetilde{u}(x)-\widetilde{u}(y)=u(x)-u(y)$.
	Therefore, we have
	\begin{align*}
		\sqrt{\mathcal{E}_0({M_w(u_i|_{V_0})})} &= \sqrt{\mathcal{E}_0(\widetilde{M_w(u_i|_{V_0})})} \\
		&\leq C_2 \|{\widetilde{M_w({u_i}|_{V_0})}}\| _{\widetilde{\mathcal{M}}}\\
		&= C_2 \max_{1\leq m<j\leq n} H_{p_m,p_j} \left|[\widetilde{M_w({u_i}|_{V_0})}]_m -[\widetilde {M_w({u_i}|_{V_0})}]_j\right| \\
		&= C_2 \max_{1\leq m<j\leq n} H_{p_m,p_j} \left|\left[{M_w(u_i|_{V_0})}\right]_m -\left[ {M_w(u_i|_{V_0})}\right]_j\right| \\
		&= C_2 \max_{1\leq m<j\leq n} H_{p_m,p_j} \left|u_i(F_w(p_m)) - u_i(F_w(p_j))\right| \\
		&\leq C_2 r_w \mathcal{E}_0(u_i|_{V_0}),
	\end{align*}
where $[\cdot]_j$ means the $j$th element.

	Let $A = \max_{1\leq i\leq N-1} \sqrt{\mathcal{E}_0(u_i|_{V_0})}$, we have
	\begin{align*}
		\mathcal{E}_0(M_w(u_i|_{V_0})) \leq (C_2)^2 r_w^2 A^2 \mathcal{E}_0(u_i|_{V_0}).
	\end{align*}
	
	Now we prove the general result. For any harmonic function $u\in \mathcal{H}$ on $K$, since $\widetilde{u}|_{V_0} \in {\widetilde{\mathcal{H}}}$ and $\{\widetilde{u_i}|_{V_0}: i=1,2,\dots,N-1\}$ forms a basis of ${\widetilde{\mathcal{H}}}$, there exist coefficients $(c_1,c_2,\dots,c_{N-1})$ such that $\widetilde{u}|_{V_0} = \sum_{i=1}^{N-1}c_i\widetilde{u_i}|_{V_0}$. By linearity of harmonic functions, we have $\widetilde{u} = \sum_{i=1}^{N-1}c_i\widetilde{u_i}$ and $M_w(\widetilde{u}|_{V_0}) = \sum_{i=1}^{N-1}c_i M_w(\widetilde{u_i}|_{V_0})$.
	It follows that 
	\begin{align}
		\sqrt{\mathcal{E}_0 (M_w (u|_{V_0}))} &=\sqrt{ \mathcal{E}_0\left( \sum_{i=1}^{N-1} c_i M_w (\widetilde{u_i}|_{V_0}) \right)} \nonumber\\
		&\leq \sum_{i=1}^{N-1} |c_i| \sqrt{\mathcal{E}_0 (M_w (\widetilde{u_i}|_{V_0}))} \nonumber\\&= \sum_{i=1}^{N-1} |c_i| \sqrt{\mathcal{E}_0 (M_w ({u_i}|_{V_0}))} \nonumber\\
		&\leq \sum_{i=1}^{N-1} |c_i| (C_2 r_w \mathcal{E}_0({u_i}|_{V_0})). \label{eq3.4.4}
	\end{align}
	
	Define new norm on ${\widetilde{\mathcal{H}}}$ by $\|h\|_{\mathcal{L}} = \sum_{i=1}^{N-1}|c_i|\sqrt{\mathcal{E}_0(\widetilde{u_i}|_{V_0})}$ for $h = \sum_{i=1}^{N-1}c_i\widetilde{u_i}|_{V_0}$. Since ${\widetilde{\mathcal{H}}}$ is finite-dimensional, $\|\cdot\|_{\mathcal{L}}$ is equivalent to $\|\cdot\|_{\widetilde{\mathcal{H}}}$. Thus, there exist constants $C_3:=C_3(V_0,{H})>0$ and $C_4:=C_4(V_0,H)>0$ such that for any $h\in \widetilde{\mathcal{H}}$, we have $C_3\|h\|_{\mathcal{L}}\leq \|h\|_{\widetilde{\mathcal{H}}}\leq C_4\|h\|_{\mathcal{L}}$.
	
	Let $A = \max_{1\leq i\leq N-1} \sqrt{\mathcal{E}_0(u_i|_{V_0})}$, we also have
	\begin{align}
		\sum_{i=1}^{N-1} |c_i| \mathcal{E}_0(u_i|_{V_0}) 
		&= \sum_{i=1}^{N-1} |c_i| \sqrt{\mathcal{E}_0(u_i|_{V_0})} \cdot \sqrt{\mathcal{E}_0(u_i|_{V_0})} \nonumber\\
		&\leq A \sum_{i=1}^{N-1} |c_i| \sqrt{\mathcal{E}_0(u_i|_{V_0})} = A \|\widetilde{u}|_{V_0}\|_{\mathcal{L}} \nonumber\\
		&\leq \frac{A}{C_3} \|\widetilde{u}|_{V_0}\|_{\widetilde{\mathcal{H}}} = \frac{A}{C_3} \sqrt{\mathcal{E}_0(\widetilde{u}|_{V_0})} = \frac{A}{C_3} \sqrt{\mathcal{E}_0(u|_{V_0})}.\label{eq3.4.5}
	\end{align}
	
Combining \eqref{eq3.4.4} and \eqref{eq3.4.5}, it follows that
	\begin{align*}
		\sqrt{\mathcal{E}_0 (M_w (u|_{V_0}))} 
		&\leq C_2  \frac{A}{C_3} r_w  \sqrt{\mathcal{E}_0(u|_{V_0})}, 
	\end{align*}
which implies the conclusion where $C = \left(\frac{C_2 A}{C_3}\right)^2$ depends only on $V_0$ and $H$.
\end{proof}

By Lemma \ref{lemma:current-equality} and Proposition
\ref{lem:osc}, we will obtain the oscillation inequality (\ref{eq_OSC}).

\begin{lemma}\rm
	\label{th:osc-bound}
	 There exists a constant $C:=C(V_0,H)$ such that for any harmonic function $u$ on $K$, for any $w\in W_m$, and for any $x,y\in F_wK$, we have
	\begin{align}
		\left| u(x)-u(y) \right| \leq C r_w \max_{1 \leq k < l \leq n} \left| u(p_k)-u(p_l) \right| \,.\label{eq_OSC}\tag{OSC}
	\end{align}
\end{lemma}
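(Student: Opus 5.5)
The plan is to reduce \eqref{eq_OSC} to Proposition \ref{lem:osc} via two facts: the maximum principle for harmonic functions on a cell, and the equivalence of norms on the finite-dimensional space $\widetilde{\mathcal{H}}$. Fix a harmonic $u$ on $K$, a word $w\in W_m$ and points $x,y\in F_wK$. The restriction $v:=u\circ F_w$ is again harmonic on $K$, because harmonicity is preserved under composition with the cell maps by the self-similarity of $\mathcal{E}$ and the harmonic extension problem \eqref{eq:variational_problem}. Its boundary values are $v|_{V_0}=M_w(u|_{V_0})$.

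\textbf{Step 1 (maximum principle).} A harmonic function on $K$ attains its maximum and minimum on $V_0$. This holds because at each level $V_m$ the values at non-boundary points are weighted averages of neighbouring values by Proposition \ref{lemma:mvp}, since $H^w_{xy}\ge 0$ off the diagonal, and $V_*$ is dense in $K$. Hence
\begin{align*}
|u(x)-u(y)|\le \max_{F_wK}u-\min_{F_wK}u=\max_{k,l}|v(p_k)-v(p_l)|.
\end{align*}

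\textbf{Step 2 (norm equivalence).} On the quotient of $\ell(V_0)$ by constants, both $\sqrt{\mathcal{E}_0(h)}$ and $\operatorname{osc}(h):=\max_{k<l}|h(p_k)-h(p_l)|$ are norms. The first is a norm because $Hh=0$ only for constant $h$, and the quotient is identified with $\widetilde{\mathcal{H}}$. Therefore there are constants $c_1,c_2$ depending only on $V_0$ and $H$ such that
\begin{align*}
c_1\operatorname{osc}(h)\le\sqrt{\mathcal{E}_0(h)}\le c_2\operatorname{osc}(h).
\end{align*}

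\textbf{Step 3 (combine).} By Proposition \ref{lem:osc},
\begin{align*}
\operatorname{osc}(v|_{V_0})\le c_1^{-1}\sqrt{\mathcal{E}_0(M_w(u|_{V_0}))}\le c_1^{-1}C^{1/2}r_w\sqrt{\mathcal{E}_0(u|_{V_0})}\le c_1^{-1}c_2C^{1/2}\,r_w\operatorname{osc}(u|_{V_0}).
\end{align*}
Together with Step 1, this gives \eqref{eq_OSC} with constant $c_1^{-1}c_2C^{1/2}$.

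The only delicate point is Step 1, the maximum principle on $K$ rather than on a single level $V_m$. I would argue level by level. The discrete mean value relation from Proposition \ref{lemma:mvp} rules out a strict interior extremum on $V_m$ relative to $V_0$: a maximum would propagate through the connected graph to the boundary, since $K$ is connected. Passing to $K$ then follows from continuity of $u$. Applying this to $v$ on its own boundary gives the oscillation statement of Step 1 directly. The exponent appearing is $r_w$, exactly the one produced by Proposition \ref{lem:osc}.
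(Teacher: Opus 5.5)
Your proposal is correct and follows essentially the same route as the paper: reduce to $x,y\in V_w$ by the maximum principle, then chain the equivalence of $\sqrt{\mathcal{E}_0}$ and the oscillation seminorm on $\widetilde{\mathcal{H}}$ with Proposition~\ref{lem:osc}. You also spell out the maximum principle via Proposition~\ref{lemma:mvp}, which the paper only invokes, and your constant $c_1^{-1}c_2C^{1/2}$ is the correct bookkeeping (the paper's final factor $C_7C_5$ should read $C_7/C_5$).
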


\begin{proof}
	By the maximum principle, it suffices to prove the inequality for $x,y\in V_w$. On $\widetilde{\mathcal{H}}$, define  $\left \Vert h\right \Vert _{\mathcal{M}}=\max_{1\leq m<j\leq n}|h_m-h_j|$. By the equivalence of norms in finite-dimensional spaces, there exist constants $C_5:=C_5(V_0,H)>0$ and $C_6:=C_6(V_0,H)>0$ such that for any $h\in \widetilde{\mathcal{H}}$, we have $C_5\left\Vert h\right \Vert_{\widetilde{\mathcal{H}}}\leq \left\Vert h\right \Vert_{\mathcal{M}}\leq C_6\left\Vert h\right \Vert_{\widetilde{\mathcal{H}}}$.
	
By Lemma~\ref{lem:osc}, for any $p_k,p_l\in V_0$, $k,l\in \{1,2,...,N\}$, we have
	\begin{align*}
			\left| u(F_wp_k)-u(F_wp_l)\right|&\leq \left\Vert \widetilde{M_w(u|_{V_0})}\right\Vert_{\mathcal{M}},\\&\leq C_6\sqrt{\mathcal{E}_0(\widetilde{M_w(u|_{V_0})})},\\&=C_6\sqrt{\mathcal{E}_0({M_w(u|_{V_0})})},\\&\leq C_7r_w\sqrt{\mathcal{E}_0(u|_{V_0})},\\&\leq C_7C_5r_w\max_{1 \leq k < l \leq n} \left| u(p_k)-u(p_l) \right|,	
	\end{align*}
which complete the proof. 
\end{proof}

We extend condition \eqref{eq_OSC}, which was studied for the Sierpi\'nski gasket and the Vicsek set in \cite{DRY23}, to the general setting of p.c.f. self-similar sets. In this setting, we find that \eqref{eq_OSC} takes an explicit form involving the scaling factors $r_w$, contrasting with the expression involving parameters $\delta_i$ used in \cite{THP14}.

In fact, a sharper version of \eqref{eq_OSC} for  Dirichlet form has been established in \cite[Proposition 8.12]{GY26} using Kigami's theory of resistance forms \cite{Kig12} (see also \cite[Theorem 6.27]{KS24}).

\section{Proof of Theorem \ref{thm_GRH} and Theorem \ref{corol_GRH}}
\label{holder}
In this section, we present the proof of \eqref{eq_GRH}, which is inspired  by \cite[Proposition 4.3]{DRY23} and then we give a new proof of \eqref{eq_HR}.

Denote the harmonic extension matrix corresponding to $p_i$ by
\begin{equation*}
	A_i = \begin{pmatrix}
		a_{11}^{(i)} & \cdots & a_{1N}^{(i)} \\
	    a_{21}^{(i)} & \cdots & a_{2N}^{(i)} \\
		\vdots & \vdots & \vdots \\
		a_{N1}^{(i)} & \cdots & a_{NN}^{(i)}
	\end{pmatrix}, \quad i = 1, 2, \dots, N.
\end{equation*}
These matrices satisfy $\sum_{t=1}^{N} a_{st}^{(i)} = 1$ for $s = 1, 2, \dots, N$, $a_{st}^{(i)} \geq 0$ for $1 \leq s, t \leq N$, and $a_{ii}^{(i)} = 1$ for $i = 1, 2, \dots, N$.

We will give a key property 
of $A_i$ as follows.

\begin{proposition}\rm
	\label{lem:minmatrix}
	For any positive integer $k\geq1$, let 
	\begin{equation*}
		A_i^k = A_i \times A_i \times \cdots \times A_i = \begin{pmatrix}
			a_{11}^{(i),k} & \cdots & a_{1N}^{(i),k} \\
			a_{21}^{(i),k} & \cdots & a_{2N}^{(i),k} \\
			\vdots & \vdots & \vdots \\
			a_{N1}^{(i),k} & \cdots & a_{NN}^{(i),k}
		\end{pmatrix}, \quad i = 1, 2, \dots, N, 
	\end{equation*}
	Then for any $\epsilon > 0$, there exists a positive integer $T_0 := T_0(\epsilon)$ such that   
	\begin{align}
		\min_{1 \leq i \leq N} \min_{1 \leq j \leq N} a_{ji}^{(i),T_0} \geq \frac{1}{2} + \epsilon.
	\end{align}
\end{proposition}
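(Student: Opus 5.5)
The plan is to identify the entries of $A_i^k$ with values of a single harmonic function, and then let $k\to\infty$ using \eqref{eq_OSC}. Throughout I take $\epsilon\in(0,\tfrac12)$; for $\epsilon\ge \tfrac12$ the bound cannot hold, since the entries of the stochastic matrix $A_i^k$ are at most $1$. I also adopt the convention implicit in the normalization $a_{ii}^{(i)}=1$: $p_i$ is the fixed point of $F_i$, and the harmonic extension matrix acts by $(A_i h)_j = u(F_ip_j)$, where $u$ is the harmonic function with $u|_{V_0}=h$.

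\textbf{Step 1 (identification of the entries).} Iterating the convention gives $(A_i^k h)_j = u(F_i^k p_j)$. Indeed, $u\circ F_i$ is again harmonic with boundary values $A_i h$, so induction on $k$ applies. Now let $u_i$ be the harmonic function with $u_i(p_i)=1$ and $u_i(p_l)=0$ for $l\ne i$, as in the proof of Proposition~\ref{lem:osc}. Taking $h=e_i$ gives
\[
a^{(i),k}_{ji} = u_i(F_i^k p_j), \qquad 1\le j\le N .
\]
Since $F_i^k p_i = p_i$, the diagonal entry satisfies $a^{(i),k}_{ii}=1$.

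\textbf{Step 2 (quantitative convergence via \eqref{eq_OSC}).} Let $w=i^k=i\cdots i\in W_k$. Both $F_wp_j$ and $p_i=F_wp_i$ lie in the cell $F_wK$. Lemma~\ref{th:osc-bound} therefore gives
\[
\bigl|u_i(F_i^kp_j)-u_i(p_i)\bigr| \le C\, r_w \max_{k',l}\bigl|u_i(p_{k'})-u_i(p_l)\bigr| = C\, r^k,
\]
where $r=r_i=\rho^{\beta-\alpha}<1$ by regularity of the harmonic structure. Hence $a^{(i),k}_{ji}\ge 1-Cr^k$ for every $j$, uniformly in $i$ because $C=C(V_0,H)$.

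\textbf{Step 3 (choice of $T_0$).} Choose $T_0$ so large that $Cr^{T_0}\le \tfrac12-\epsilon$. Step 2 then yields $\min_i\min_j a^{(i),T_0}_{ji}\ge \tfrac12+\epsilon$.

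\textbf{Alternative and main obstacle.} A softer argument avoids Lemma~\ref{th:osc-bound}: $u_i$ is continuous, $F_i^kp_j\to p_i$, and there are only finitely many pairs $(i,j)$. The main obstacle is not analytic but bookkeeping. One must check that the paper's indexing really matches $F_ip_i=p_i$ and the row/column convention above. If instead $A_i$ acts by columns, then the relevant entries are the transpose, and the argument has to be redone with $a^{(i),k}_{ij}$. Note also that with the conventions of Step 1, the row sums $\sum_t a^{(i)}_{st}=1$ correspond to $A_i\mathbf 1=\mathbf 1$, i.e.\ constants extend to constants. This consistency check confirms that Step 1 is the right reading.
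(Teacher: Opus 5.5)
Your proof is correct and is essentially the paper's argument: both read $a^{(i),k}_{ji}$ as $u_i(F_i^kp_j)$, apply \eqref{eq_OSC} on the cell $F_i^kK\ni p_i$ to get $\bigl|a^{(i),k}_{ji}-1\bigr|\le C\rho^{(\beta-\alpha)k}$, and then take $T_0$ large. Your explicit bound $1-Cr^k$, uniform in $i$, is a slightly cleaner way to choose $T_0$ than the paper's ``$T_0=\max_i T_i$''. Your restriction to $\epsilon<\tfrac12$ also makes explicit a condition the statement tacitly requires.
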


\begin{proof}
Without loss of generality, we only prove the statement for $A_1$.
Suppose $u$ is harmonic on $K$. For any vector $(u(p_1), u(p_2), \dots, u(p_N))^T$, by Theorem~\ref{th:osc-bound}, we have
	\begin{equation*}
		\begin{split}
			\max_{1 \leq i < j \leq N} \left| \sum_{t=1}^{N} a_{jt}^{(1),k} u(p_t) - \sum_{t=1}^{N} a_{it}^{(1),k} u(p_t) \right| & \leq C \rho^{(\beta-\alpha)k} \max_{1 \leq i < j \leq N} |u(p_i) - u(p_j)| \\
			& \to 0 \quad (k \to \infty).
		\end{split}
	\end{equation*}
	In particular, for every $2 \leq j \leq N$, we have
	\begin{equation*}
		\left| \sum_{t=1}^{N} a_{jt}^{(1),k} u(p_t) - u(p_1) \right| \to 0 \quad (k \to \infty).
	\end{equation*}
	
Now, choose a harmonic function $u$ on $K$ such that $(u(p_1), u(p_2), \dots, u(p_N))^T = (1, 0, \dots, 0)^T$. Then we obtain
	\begin{equation*}
		|a_{j1}^{(1),k} - 1| \to 0 \quad (k \to \infty),
	\end{equation*}
	for every $j = 2, 3, \dots, N$.
Hence, there exists $T_1$ such that 
$$\min_{1 \leq j \leq N} a_{j1}^{(1),T_1} \geq \frac{1}{2} + \epsilon.$$
	
Let $T_0 = \max_{1 \leq i \leq N} \{T_i\}$. By induction,
we have 
 $$\min_{1 \leq i \leq N} \min_{1 \leq j \leq N} a_{ji}^{(i),T_0} \geq \frac{1}{2} + \epsilon,$$
hence we complete the proof.
\end{proof}

\begin{proof}[Proof of Theorem \ref{thm_GRH}]

	By Proposition \ref{lem:minmatrix}, we can pick up $T_0$ such that $$\min_{1\leq i\leq N}\min_{1\leq j\leq N}a_{ji}^{(i),T_0}\geq \frac{5}{6}.$$
	
	For any ball $B:=B(x_0,r)$, when $r\in (0,\rho^{-T_0})$, the \eqref{eq_GRH} inequality obviously holds. 
	We suppose that $u$ is harmonic in $lB$($l$ is the diameter of the 1-skeleton of $X$).
	We now assume $r\geq\rho ^{-T_0}$, and without loss of generality, let $\rho^{-k}\leq r < \rho^{-(k+1)}$ for some $k\geq T_0$. For any $x\in B\setminus G$, there exist $p,q\in G\cap 2B$ with $|p-q|=1$ such that $x\in (p,q)\subseteq 2B$, hence $|\nabla u(x)|=|u(p)-u(q)|$. Since there exists a $k$-skeleton $W$ satisfying $p,q\in W\subseteq lB$, and we have
	\begin{equation*}
		m(W)\leq m(lB)\leq C\left(\frac{l}{\rho^{k+1}}\right)^{\alpha} = C_1 M^{k+1},
	\end{equation*}
	and there exists $C_2>1$ such that $C_2^{-1}M^k\leq m(W)\leq C_2M^k$.
	
	
	Let $q_1,q_2,\cdots,q_N$ be the boundary points of $W$. Let $F:\mathbb{R}^2\to\mathbb{R}^2$ be the affine mapping that maps $p_i$ to $q_i$, $i=1,2,3,\cdots,N$. Let $v$ be the harmonic function on the p.c.f. self-similar set $K$ with $v(p_i)=u(q_i)$, $i=1,2,3,\cdots,N$. Noting that $W\cap V=F(V_k)$, we have $v=u\circ F$ on $V_k$ or $u=v\circ F^{-1}$ on $W\cap V$. Let $i_1,\ldots,i_k\in\{1,2,3,\cdots,M\}$ satisfy $F^{-1}(p),F^{-1}(q)\in f_{i_1}\circ\ldots\circ f_{i_k}(K)$. By Theorem \ref{th:osc-bound}, we have 
   \begin{align*}
	|u(p)-u(q)|&=|v(F^{-1}(p))-v(F^{-1}(q))|\\
	&\leq\mathrm{Osc}(v,f_{i_1}\circ\ldots\circ f_{i_k}(K))\\
	&=Cr_{w}\max\{|v(p_i)-v(p_j)|:i,j=1,2,3,\cdots,N\}\\
	&=Cr_{w}\max\{|u(q_i)-u(q_j)|:i,j=1,2,3,\cdots,N\}.
   \end{align*}
   
   
	 Without loss of generality, assume $u(q_1)=\max_{1\leq i\leq N}|u(q_i)| > 0$. Take an $(k-T_0)$-skeleton $W_0$ that has $q_1$ as one of its boundary points and satisfies $W_0\subseteq W$. Denote the boundary points of $W_0$ by $t_1,t_2,\dots,t_n$. We obtain
	\begin{equation*}
		\begin{split}
			u(t_j) & = \sum_{i=1}^{N}a_{ji}^{(1),T_0}u(q_i) \\
			& = a_{j1}^{(1),T_0}u(q_1) + \sum_{i=2}^{N}a_{ji}^{(1),T_0}u(q_i) \\
			& \geq \left(a_{j1}^{(1),T_0} - \sum_{i=2}^{N}a_{ji}^{(1),T_0}\right)u(q_1) \\
			& = (2a_{j1}^{(1),T_0} - 1)u(q_1) \\
			& \geq \frac{2}{3}u(q_1),j=1,2...N,
		\end{split}
	\end{equation*}
	where the fourth equality uses the fact that $\sum_{i=1}^{N}a_{ji}^{(1),T_0}=1$.
	
	By the maximum principle, we have
	\begin{equation*}
		u(t) \geq \frac{2}{3}u(q_1) > 0,
	\end{equation*}
	for any $t$ on $W_0$.
	
Therefore,
	\begin{align*}
			|\nabla u(x)|  &= |u(p)-u(q)|  \leq C\rho^{(\beta-\alpha)k} \max_{1\leq i < j\leq n}|u(q_i)-u(q_j)| \nonumber\\
			& \leq 2C\rho^{(\beta-\alpha)k}u(q_1)  \leq 2C\rho^{(\beta-\alpha)k}\frac{3}{2} \dashint_{W_0} |u| \, dm 
			\leq C_3 \rho^{(\beta-\alpha)k}\dashint_{lB}|u|dm\\&\leq C_4\frac{1}{r^{\beta-\alpha}}\dashint_{lB}|u|dm,
	\end{align*}
	If $u$ is a harmonic function in $2B$, covering $B$ by at most countable family of balls like $B(y_i,\frac{r}{2l}),y_i\in B$, we conclude that
	\begin{align}
			\left\Vert \left\vert \nabla u\right\vert\right\Vert_{L^{\infty}(B)}&\leq\sup_{i}\left\Vert \left\vert \nabla u\right\vert\right\Vert_{L^{\infty}(B(y_i,\frac{r}{2l}))}\nonumber\\&\leq C_4\sup_{i}(2l)^{\beta-\alpha}\frac{1}{r^{\beta-\alpha}}\dashint_{B(y_i,\frac{r}{2})}|u|dm\nonumber\\
			&\leq  \frac{C_5}{r^{(\beta-\alpha)}}\dashint_{2B}|u|dm,\label{eq4.3}
	\end{align}
which complete the proof.
\end{proof}

Next we will prove the H\"older regularity \eqref{eq_HR} of harmonic function in 
bounded and unbounded p.c.f. self-similar set
$K_n$.

According to \cite[Proposition 2.5]{GL20A}, a p.c.f. self-similar set $K$ satisfies Condition (H). This means there exists a constant $c_H = c_H(K) > 0$ with the following property: for any two words $w, w'$ of length $m \geq 1$, points $x$ and $y$ satisfy $d(x,y) < c_H\rho^m$ if and only if they lie in the same or adjacent $m$-cells. We observe that Condition (H) also holds in the unbounded case (see \cite{GYZ26}). 

\begin{proof}[Proof of Theorem \ref{corol_GRH}]

	Let $B=B(x_0,r)$ and let $u$ be a harmonic function in $l_0B$, where $l_0$ is left to be determined later. 
	We may assume there exists $k\in \mathbb{Z}$ such that $\rho^{k+1}\leq r<\rho^{k}$. 
	For any $x,y\in B$, we may assume there exists $m\in\mathbb{Z}$ such that 
	$c_H\rho^{m+1}\leq d(x,y)< c_H\rho^m$. Since $x,y\in B$, we have 
	$c_H\rho^{m+1}\leq d(x,y)\leq 2r<2\rho^k$.
	We find that a positive integer $n_0$ such that 
	for $n_0+\frac{\log(2/c)}{\log\rho}>0$, we have   $m-(k-n_0-1)>1$.
	
	From the construction of $K_\infty$, we know that for sufficiently large $n$, $x,y\in K_n$. 
	By condition $(H)$, there exist $m+n$ cells $W^{1}$ and $W^{2}$ of $K_n$ such that 
	$x\in W^{1}$, $y\in W^{2}$, and $W^1\cap W^2\neq\emptyset$.  
	We can extend $W^1$, $W^2$ to $Y_{k-n_0-1}^1$, $Y_{k-n_0-1}^2$ respectively, where  $Y_{k-n_0-1}^1$, $Y_{k-n_0-1}^2$ are translates of $\rho^{k-n_0-1}K$,
	then $W^1$, $W^2$ are the $m-(k-n_0-1)$ cells of $Y_{k-n_0-1}^1$, $Y_{k-n_0-1}^2$ respectively. 
	We can pick up $l_0:=\left(\rho^{-(n_0+2)}+1\right)$ such that $Y_{k-n_0-1}^1,Y_{k-n_0-1}^2\subseteq l_0B$.
	
	Let $z\in W^1\cap W^2$. For $x$, there exists a chain $\{x_{i}\}_{i\geq0}\subseteq V_{\infty}$ 
	between $z$ and $x$, where $x_0=z$, $d(x_{i+1},x_i) \simeq \rho^{m+i}$, $\lim_{i\rightarrow\infty} x_i=x$. 
	Therefore, by Lemma \ref{th:osc-bound}, we have 
	\begin{align*}
		|u(x_{i+1})-u(x_{i})|
		&\leq C_1 \rho^{(\beta-\alpha)(m-(k-n_0-1)+i)} 
		\max_{\substack{p_s,p_j \text{ are vertices of } Y_{k-n_0-1}^1,\\ 
				1\leq s<j\leq N}} |u(p_s)-u(p_j)| \\
		&=C_2\rho^{(\beta-\alpha)(m-k+i)} 
		\max_{\substack{p_s,p_j \text{ are vertices of } Y_{k-n_0-1}^1,\\ 
				1\leq s<j\leq N}} |u(p_s)-u(p_j)|.
	\end{align*}
	
Note that $m(Y_{k-n_0-1}^1)$ is comparable to $m(B)$, 
we have 
	\begin{equation*}
		\max_{\substack{p_s,p_j \text{ are vertices of } Y_{k-n_0-1}^1,\\ 
				1\leq s<j\leq N}} |u(p_s)-u(p_j)|
		\leq C_3 \dashint_{l_0B}|u|\,dm.
	\end{equation*}
Hence,
	\begin{align*}
		\left|u(z)-u(x)\right|
		&\leq C_2C_3 \sum_{i=0}^{\infty} 
		\rho^{(\beta-\alpha)(m-k+i)}\dashint_{l_0B}|u|\,dm  \\
		&\leq \frac{C_2C_3}{1-\rho^{\beta-\alpha}} 
		\rho^{(\beta-\alpha)(m-k)} \dashint_{l_0B}|u|\,dm.
	\end{align*}

	Similarly, we obtain
	\[
	\left|u(z)-u(y)\right|
	\leq \frac{C_2C_3}{1-\rho^{\beta-\alpha}} 
	\rho^{(\beta-\alpha)(m-k)} \dashint_{l_0B}|u|\,dm.
	\]
	Therefore,
	\begin{equation*}
		\begin{split}
			\left|u(x)-u(y)\right|
			&\leq \left|u(x)-u(z)\right| + \left|u(z)-u(y)\right| \\
			&\leq \frac{2C_2C_3}{1-\rho^{\beta-\alpha}} 
			\rho^{(\beta-\alpha)(m-k)} \dashint_{l_0B}|u|\,dm \\
			&\leq C_4\left(\frac{d(x,y)}{r}\right)^{\beta-\alpha} 
			\dashint_{l_0B}|u|\,dm.
		\end{split}
	\end{equation*}
	
	Now assume $u$ is a harmonic function in $2B$. For any $x,y\in B$, when 
	$d(x,y)\leq \frac{r}{4l_0}$, we observe that 
	$y\in B(x,\frac{r}{2l_0})\subseteq B(x,\frac{r}{2})\subseteq 2B$, thus
	\begin{equation*}
		\left|u(x)-u(y)\right|
		\leq C_4(2l_0)^{\beta-\alpha}\left(\frac{d(x,y)}{r}\right)^{\beta-\alpha}
		\dashint_{B(x,\frac{r}{2})}|u|\,dm
		\leq C_6\left(\frac{d(x,y)}{r}\right)^{\beta-\alpha}
		\dashint_{2B}|u|\,dm.
	\end{equation*}
	When $d(x,y)\geq \frac{r}{4l_0}$, we can prove that for any $x\in B$,
	\begin{equation*}
		|u(x)|\leq \max_{x\in B} |u(x)|\leq C_7 \dashint_{2B}|u|\,dm.
	\end{equation*}
	Therefore, we can choose a sufficiently large constant $C_8$, 
	depending only on $l_0$, such that
	\begin{equation*}
		\left|u(x)-u(y)\right|
		\leq 2C_7 \dashint_{2B}|u|\,dm
		\leq C_8\left(\frac{d(x,y)}{r}\right)^{\beta-\alpha}
		\dashint_{2B}|u|\,dm.
	\end{equation*}
	
	In summary, for any $x,y \in B$, we have 
	\begin{equation*}
		\left|u(x)-u(y)\right|
		\leq \max\{C_6,C_8\}\left(\frac{d(x,y)}{r}\right)^{\beta-\alpha}
		\dashint_{2B}|u|\,dm,
	\end{equation*}
	which completes the proof.

\end{proof}

\section{Example}
\label{sec:example}
In this section, we primarily present two classic examples of p.c.f. self-similar sets, namely the Sierpi\'nski gasket and the Vicsek set. Our goal is to prove Theorems \ref{thm_GRH} and \ref{corol_GRH} on these sets. The key lies in establishing the oscillation inequality in Lemma \ref{th:osc-bound}. Our proof differs from that Proposition 4.1 and Proposition 4.3 in \cite[Proposition 4.1 and Proposition 4.3]{DRY23}, as we provide a unified approach.
Beside the Sierpi\'nski gasket and the Vicsek set, we also give  a Eyebolted Vicsek cross, which is not a nested fractal (see \cite{GL20B}), to show the main result.

\begin{figure}[ht]
	\centering
	\begin{minipage}[t]{0.4\textwidth}
		\centering
		\includegraphics[width=\textwidth]{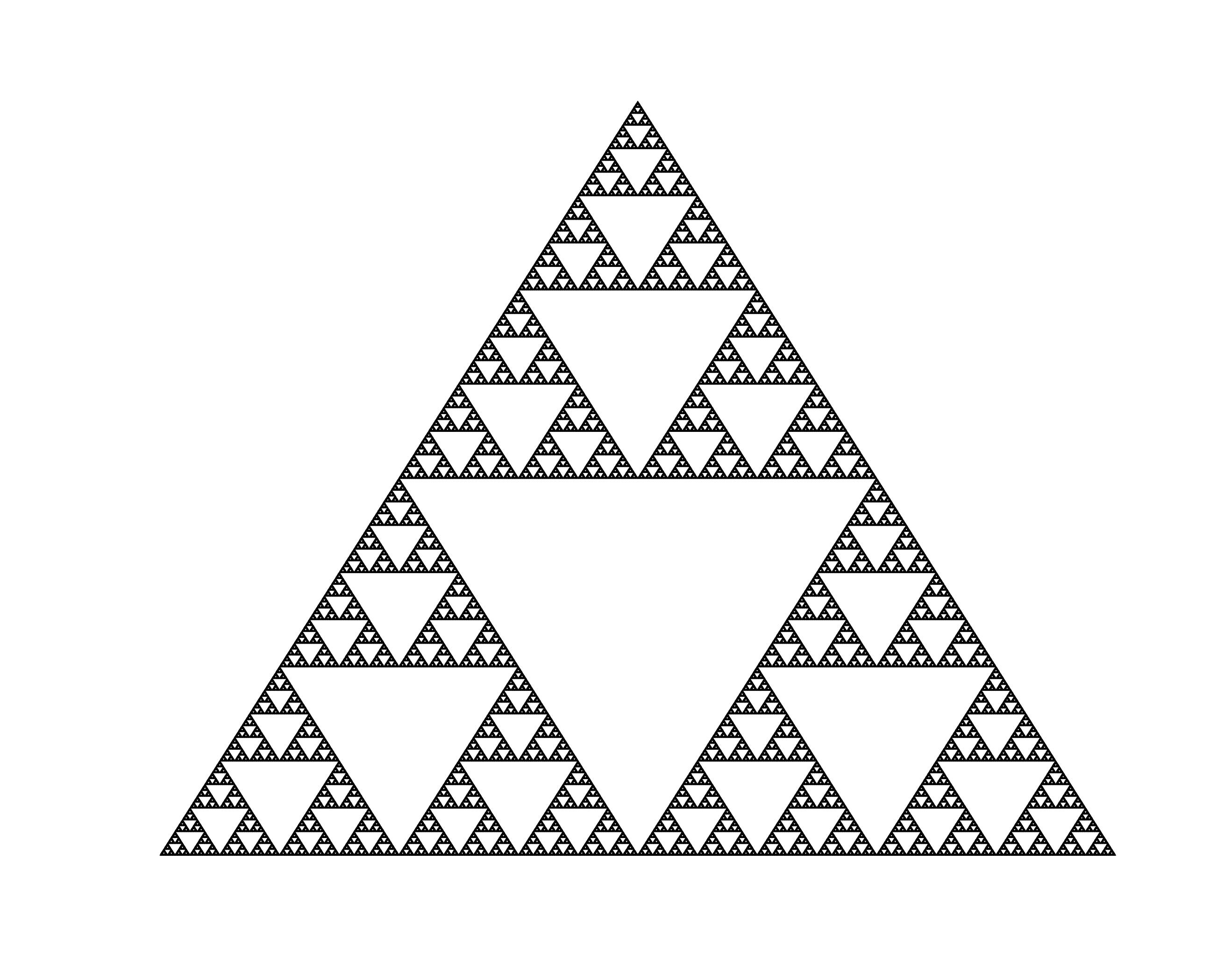}
		\caption{The Sierpi\'nski Gasket}\label{fig_SG}
	\end{minipage}
	\begin{minipage}[t]{0.4\textwidth}
		\centering
		\includegraphics[width=\textwidth]{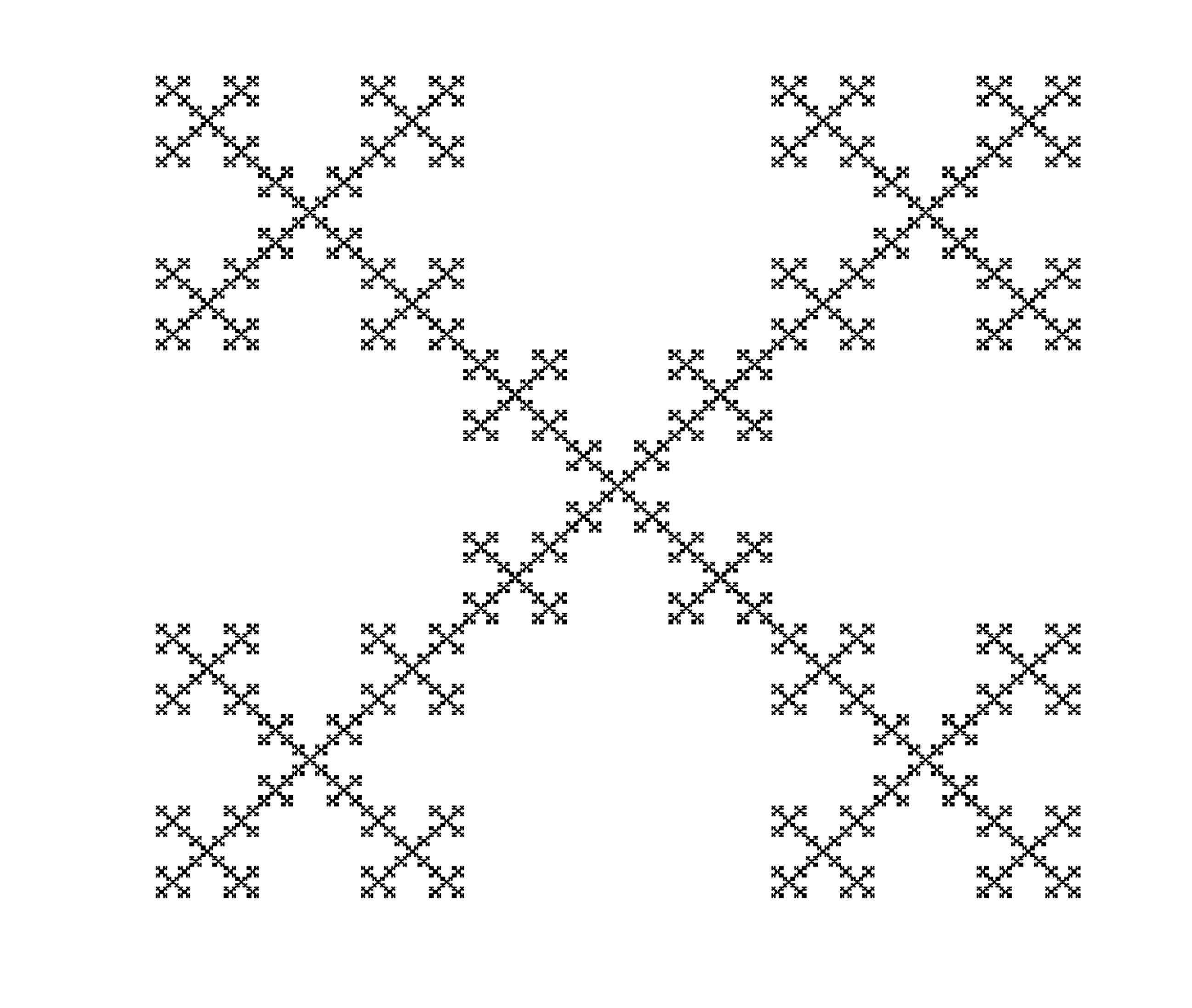}
		\caption{The Vicsek set}\label{fig_V}
	\end{minipage}
\end{figure}

\begin{example}
[The Sierpi\'nski gasket]\label{exp1}\rm
 In $\mathbb{R}^2$, let $ p_1 = (0, 0)$, $ p_2 = (1, 0) $ and $ p_3 = (\frac{1}{2}, \frac{\sqrt{3}}{2}) $.  Let $\{F_i\}_{i=1}^3$
 be the IFS of contractive similitudes on $\mathbb{R}^2$ such that
 \begin{align}
   F_i(x) = \frac{1}{2}(x + p_i), 1\leq i\leq 3,
 \end{align}
Sierpi\'nski gasket $K$ is a unique non-empty compact set in $\mathbb{R}^2$ satisfying $ K = \bigcup_{i=1}^{3} F_i(K) $ see Figure \ref{fig_SG}.
 Let $ V_0 = \{ p_1, p_2, p_3 \} $ and $ V_{n+1} = \bigcup_{i=1}^{3} F_i(V_n) $ for any $ n \geq 0 $. Then $\{ V_n \}_{n \geq 0}$ is an increasing sequence of finite subsets of $ K $ and the closure of $ \bigcup_{n \geq 0} V_n $ is $ K $. 
\end{example}
We will construct the local regular Dirichlet form ($\mathcal{E},\mathcal{F}$) as \eqref{DF_Def} on the Sierpi\'nski gasket (also see \cite{Kig89}), 
where
\begin{align*}
\mathcal{E}_{m}(u, u) = \left(\frac{5}{3}\right)^m \cdot \frac{1}{2} \sum_{w \in W_m} \sum_{x,y \in V_w} (u(x) - u(y))^2.
\end{align*}
By Lemma \ref{th:osc-bound}, we will give the  oscillation inequality of Sierpi\'nski gasket as follows. 

Firstly, for any $i,j\in\{1,2,3\}$, we assume that $u$ is harmonic function on $K$ with $u(p_i)=1$ and
$u(p_j)=0$ for all $j\neq i$,
\begin{align*}
\mathcal{E}_0(u|_{V_0}) = 2.
\end{align*}
Also see Figure \ref{fig3}.
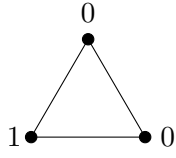
\begin{figure}[ht]
	\centering
	\begin{tikzpicture}[scale=1.5]
		\draw(0,0)--(1,0)--(1/2,1.73/2)--(0,0)--cycle;
		\node at (-0.15,0){$1$};
		\node at (1.20,0){$0$};
		\node at (0.5,1.1){$0$};
		\draw[fill=black] (0,0) circle (0.05) ;
		\draw[fill=black] (1,0) circle (0.05) ;
		\draw[fill=black] (1/2,1.73/2) circle (0.05);
	\end{tikzpicture}
	\caption{ harmonic function $u$ on $V_0$ for the Sierpi\'nski gasket}\label{fig3}
\end{figure}

By Lemma~\ref{lemma:current-equality}, for any $w\in W_m$ and for any $x,y\in V_w$,

\begin{align*}
|u(x) - u(y)| \leq \left(\frac{3}{5}\right)^m \mathcal{E}_0 (u|_{V_0})
\end{align*}

Therefore,
\begin{align*}
		&\mathcal{E}_0 (M_w (u \big|_{V_0}))		
		= \frac{1}{2} \sum_{x,y \in V_0} (u(F_w x) - u(F_w y))^2	\\
		&\leq 3 \max_{x,y \in V_0} |u(F_w x) - u(F_w y)|^2
        \leq 6\left(\frac{3}{5}\right)^{2m} \mathcal{E}_0 (u \big|_{V_0})
\end{align*}

For any harmonic function $u = \sum_{i=1}^3 c_i u_i$ on $K$, where $u_i$ is a harmonic function with $u_i(p_i)=1$ and
$u_i(p_j)=0$ for all $j\neq i$.  Without loss of generality assume $\sum_{i=1}^{3}u(p_i)=0$, i.e., $\sum_{i=1}^{3}c_i=0$.
Thus, we have 
\begin{align*}
		\mathcal{E}_0^\frac{1}{2} (M_w (u \big|_{V_0})) \leq \sum_{i=1}^3 |c_i| \mathcal{E}_0^\frac{1}{2} (M_w (u_i \big|_{V_0}))
		\leq \sqrt{6}\left(\frac{3}{5}\right)^{m}\sum_{i=1}^3|c_i| \mathcal{E}_0^{\frac{1}{2}} (u_i \big|_{V_0})
\end{align*}

Using the Cauchy-Schwarz inequality and $\sum_{i=1}^{3}c_i=0$, we can prove
\begin{align*}
		\frac{1}{2}\sum_{i=1}^{3}|c_i|\mathcal{E}_0^{\frac{1}{2}}(u_i|_{V_0})
		=\frac{\sqrt{2}}{2}\sum_{i=1}^{3}|c_i|\leq \left( \sum_{1 \leq i < j \leq 3} (c_i - c_j)^2 \right)^{1/2} =\mathcal{E}_0^{1/2} \left( \sum_{i=1}^3 c_i u_i \right).
\end{align*}
it implies that 
\begin{align*}
\mathcal{E}_0^\frac{1}{2} (M_w (u \big|_{V_0}))  \leq 2\sqrt{6}\left(\frac{3}{5}\right)^{m}  \mathcal{E}_0^{1/2}(u \big|_{V_0}).
\end{align*}

Thus, for any $w\in W_m$, $x,y\in V_w$, we have 
\begin{align}
		|u(x)-u(y)| \leq \mathcal{E}_0^{1/2} (M_w u \big|_{V_0})
		\leq 6\sqrt{2}\left(\frac{3}{5}\right)^m \max_{1 \leq k,l \leq 3}|u(p_k)-u(p_l)|.\label{sg}
\end{align}

Applying \eqref{sg} in Theorem \ref{thm_GRH}, we obtain the \eqref{eq_GRH}  holds in cable system induced by Sierpi\'nski gasket, which is also considered in  \cite[Proposition 4.3]{DRY23}. Furthermore, we also prove that \eqref{eq_HR} holds by \eqref{sg} in the bounded and unbounded Sierpi\'nski gasket.

\begin{example}[The Vicsek set]\label{exp3}\rm
	Let $p_1 = (0, 0)$,$p_2 = (1, 0)$, $p_3 = (1, 1)$, $p_4 = (0, 1)$, $p_5 = \left( \frac{1}{2}, \frac{1}{2} \right)$
	be the four corners and center of the unit square in the plane. Define
	\begin{align}
		F_i(x) = \frac{1}{3} (x - p_i) + p_i \quad (1 \leq i \leq 5).
	\end{align}
	The \emph{Vicsek set} $K$ is determined by $ K = \bigcup_{i=1}^{5} F_i(K)$ see Figure \ref{fig_V}. It is  a nested fractal, and the boundary is $ V_0 = \{ p_1, p_2, p_3, p_4 \} $.
	 Let  $ V_{n+1} = \bigcup_{i=1}^{4} F_i(V_n) $ for any $ n \geq 0 $. Then $\{ V_n \}_{n \geq 0}$ is an increasing sequence of finite subsets of $ K $ and the closure of $ \bigcup_{n \geq 0} V_n $ is $ K $. 
\end{example}	
We will construct the local regular Dirichlet form ($\mathcal{E},\mathcal{F}$) as \eqref{DF_Def} on the Vicsek set, 
where
$$
\mathcal{E}_{m}(u, u) = 3^m \cdot \frac{1}{2} \sum_{w \in W_m} \sum_{x,y \in V_w} (u(x) - u(y))^2, \quad u \text{ is a function on } V_m.
$$
By Lemma \ref{th:osc-bound}, we will give the  oscillation inequality of Vicsek set as follows.

Firstly, for any $i,j\in\{1,2,3,4\}$, we assume that $u$ is harmonic function on $K$ with $u(p_i)=1$ and
$u(p_j)=0$ for all $j\neq i$,
$$
\mathcal{E}_0(u|_{V_0}) = 3.
$$
Also see Figure \ref{fig4}.
\begin{figure}[ht]
	\centering
	\begin{tikzpicture}[scale=1.5]
		\draw(0,0)--(1,0)--(1,1)--(0,1)--(0,0)--cycle;
		\draw(0,0)--(1,1)--(0,0)--cycle;
		\draw(1,0)--(0,1)--(1,0)--cycle;
		\node at (-0.15,0){$1$};
		\node at (1.20,0){$0$};
		\node at (1.2,1){$0$};
		\node at (-0.15,1){$0$};
		\draw[fill=black] (0,0) circle (0.05) ;
		\draw[fill=black] (1,0) circle (0.05) ;
		\draw[fill=black] (1,1) circle (0.05);
		\draw[fill=black] (0,1) circle (0.05);
	\end{tikzpicture}
	\caption{harmonic function $u$ on $V_0$ for the Vicsek set}\label{fig4}
\end{figure}
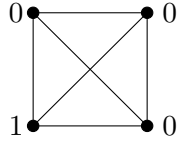

By Lemma~\ref{lemma:current-equality}, for any $w\in W_m$ and for any $x,y\in V_w$,
$$
|u(x) - u(y)| \leq \frac{1}{3^m} \mathcal{E}_0 (u|_{V_0})
$$

Therefore,
\begin{align*}
	\mathcal{E}_0 (M_w (u\big|_{V_0}))&		
	= \frac{1}{2} \sum_{x,y \in V_0} (u(F_w x) - u(F_w y))^2	
	,\\&\leq 6 \max_{x,y \in V_0} |u(F_w x) - u(F_w y)|^2
	,\\&\leq \frac{6}{3^{2m}} \mathcal{E}_0 ^2(u \big|_{V_0}) = \frac{18}{3^{2m}} \mathcal{E}_0 (u \big|_{V_0}).
\end{align*}

For any harmonic function $u = \sum_{i=1}^4 c_i u_i$ on $K$, where $u_i$ is harmonic function on $K$ with $u_i(p_i)=1$ and
$u_i(p_j)=0$ for all $j\neq i$. Without loss of generality assume $\sum_{i=1}^{4}u(p_i)=0$, i.e., $\sum_{i=1}^{4}c_i=0$.
Thus we have 
\begin{align*}
	\mathcal{E}_0^\frac{1}{2} (M_w (u \big|_{V_0})) \leq \sum_{i=1}^4 |c_i| \mathcal{E}_0^\frac{1}{2} (M_w (u_i \big|_{V_0}))\leq \frac{\sqrt{18}}{3^m} \sum_{i=1}^4 |c_i| \mathcal{E}_0^{\frac{1}{2}} (u_i \big|_{V_0}).
\end{align*}
Using the Cauchy-Schwarz inequality and $\sum_{i=1}^{4}c_i=0$, we can prove
\begin{align*}
	\frac{1}{2}\sum_{i=1}^{4}|c_i|\mathcal{E}_0^{\frac{1}{2}}(u_i|_{V_0})
	=\frac{\sqrt{3}}{2}\sum_{i=1}^{4}|c_i|\leq \left( \sum_{1 \leq i < j \leq 4} (c_i - c_j)^2 \right)^{1/2} =\mathcal{E}_0^{1/2} \left( \sum_{i=1}^4 c_i u_i \right).
\end{align*}
It implies
\[
\mathcal{E}_0^\frac{1}{2} (M_w (u \big|_{V_0}))  \leq \frac{6\sqrt{2}}{3^m}  \mathcal{E}_0^{1/2}(u \big|_{V_0}).
\].

Thus for any $w\in W_m$, $x,y\in V_w$, we have
\begin{align}	
	|u(x)-u(y)| \leq \mathcal{E}_0^{1/2} (M_w u \big|_{V_0})\leq \frac{2 \sqrt{18} }{3^m}  \mathcal{E}_0^{1/2} (u \big|_{V_0})
	\leq \frac{12\sqrt{3}}{3^m} \max_{1 \leq k,l \leq 4}|u(p_k)-u(p_l)|,\label{vc}
\end{align}
which implies the oscillation inequality of harmonic function. 

Applying \eqref{vc} in Theorem \ref{thm_GRH}, we obtain the \eqref{eq_GRH}  holds in cable system induced by Vicsek set, which is also considered in  \cite[Proposition 4.1]{DRY23}. Furthermore, we also prove that \eqref{eq_HR} holds by \eqref{vc} in the bounded and unbounded Vicsek set.

Finally, we will give a example besides Sierpi\'nski gasket and Vicsek set, for example, eyebolted Vicsek cross introduced by Gu and Lau in \cite[Section 7]{GL20B}. Further example also be found in \cite{GL20A,GL20B,Kig01book} and the references therein. 

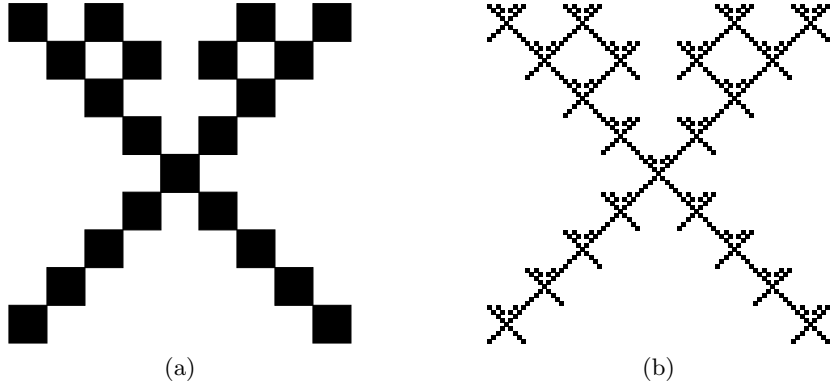
\begin{figure}[htbp]
	\centering
	\subfloat[]{
		\begin{tikzpicture}[scale=0.5]
			\draw[fill=black] (0,0)--(1,0)--(1,1)--(0,1)--cycle;
			\draw[fill=black] (1,1)--(2,1)--(2,2)--(1,2)--cycle;
			\draw[fill=black] (2,2)--(3,2)--(3,3)--(2,3)--cycle;
			\draw[fill=black] (3,3)--(4,3)--(4,4)--(3,4)--cycle;
			\draw[fill=black] (4,4)--(5,4)--(5,5)--(4,5)--cycle;
			\draw[fill=black] (5,5)--(6,5)--(6,6)--(5,6)--cycle;
			\draw[fill=black] (6,6)--(7,6)--(7,7)--(6,7)--cycle;
			\draw[fill=black] (7,7)--(8,7)--(8,8)--(7,8)--cycle;
			\draw[fill=black] (8,8)--(9,8)--(9,9)--(8,9)--cycle;
			\draw[fill=black] (8,0)--(9,0)--(9,1)--(8,1) --cycle;
			\draw[fill=black] (7,1)--(8,1)--(8,2)--(7,2)--cycle;
			\draw[fill=black] (6,2)--(7,2)--(7,3)--(6,3)--cycle;
			\draw[fill=black] (5,3)--(6,3)--(6,4)--(5,4)--cycle;
			\draw[fill=black] (4,4)--(5,4)--(5,5)--(4,5)--cycle;
			\draw[fill=black] (3,5)--(4,5)--(4,6)--(3,6)--cycle;
			\draw[fill=black] (2,6)--(3,6)--(3,7)--(2,7)--cycle;
			\draw[fill=black] (1,7)--(2,7)--(2,8)--(1,8)--cycle;
			\draw[fill=black] (0,8)--(1,8)--(1,9)--(0,9)--cycle;
			\draw[fill=black] (2,8)--(3,8)--(3,9)--(2,9)--cycle;
			\draw[fill=black] (3,7)--(4,7)--(4,8)--(3,8)--cycle;
			\draw[fill=black] (5,7)--(6,7)--(6,8)--(5,8)--cycle;
			\draw[fill=black] (6,8)--(7,8)--(7,9)--(6,9)--cycle;		
		\end{tikzpicture}
	}\hspace{40pt}
	\subfloat[]{
		\begin{tikzpicture}[scale=0.5]
			\def\levelOneBlocks{
				{0,0}, {1,1}, {2,2}, {3,3}, {4,4}, {5,5}, {6,6}, {7,7}, {8,8},
				{8,0}, {7,1}, {6,2}, {5,3}, {3,5}, {2,6}, {1,7}, {0,8},
				{2,8}, {3,7}, {5,7}, {6,8}
			}
			
			\def\originalBlocks{
				{0,0}, {1,1}, {2,2}, {3,3}, {4,4}, {5,5}, {6,6}, {7,7}, {8,8},
				{8,0}, {7,1}, {6,2}, {5,3}, {3,5}, {2,6}, {1,7}, {0,8},
				{2,8}, {3,7}, {5,7}, {6,8}
			}
			
			\foreach \i/\j in {0/0,1/1,2/2,3/3,4/4,5/5,6/6,7/7,8/8,
				8/0,7/1,6/2,5/3,3/5,2/6,1/7,0/8,
				2/8,3/7,5/7,6/8} {
				\foreach \I/\J in {0/0,1/1,2/2,3/3,4/4,5/5,6/6,7/7,8/8,
					8/0,7/1,6/2,5/3,3/5,2/6,1/7,0/8,
					2/8,3/7,5/7,6/8} {
					\pgfmathsetmacro{\x}{\i + \I/9}
					\pgfmathsetmacro{\y}{\j + \J/9}
					\fill[black] (\x, \y) rectangle ++(1/9, 1/9);
				}
			}
			
		\end{tikzpicture}
	}
	\caption{A eyebolted Vicsek cross}
	\label{fig5}
\end{figure}

\begin{example}[The eyebolted Vicsek cross]\label{exp3}\rm
	In ${\Bbb R}^2$, In $\mathbb{R}^2$, let $p_1 = (0, 0)$,$p_2 = (1, 0)$, $p_3 = (1, 1)$, $p_4 = (0, 1)$, $p_5 = \left( \frac{1}{2}, \frac{1}{2} \right)$
	be the four corners and center of the unit square in the plane.
	Divide the unit square into a mesh of sub-squares of size $1/9$, and pick $21$ sub-squares as shown in Figure \ref{fig5}.
	Let $\{a_i\}_{i=1}^{21}$ be the center of these sub-squares. Let $\{F_i\}_{i=1}^{21}$ be the IFS on $\mathbb{R}^2$ with
	\begin{align}
		F_i(x) = \frac{1}{9}(x-a_i) + a_i,  \qquad 1\leq i \leq 21.
	\end{align}
	The \emph{eyebolted Vicsek cross} $K=\bigcup_{i=1}^{21} F_i(K)$ is a p.c.f. self-similar set with boundary $V_0=\{p_1,p_2,p_3,p_4\}$, which is  not a nested fractal.
\end{example}	

The Hausdorff dimension of $K$ is $\alpha=\log21/\log9$, and the self-similar measure with the natural weight is the normalized $\alpha-$dimensional Hausdorff measure $\mu$ on $K$.
The walk dimension of $K$ is $\beta=\frac{\log 21+\log(35/4)}{\log 9}$. By \cite[Theorem 6.1]{GL20B}, there exist  a local regular Dirichlet
form on $K$ with the renormalization factor $r=\frac{4}{35}$.

By Lemma \ref{th:osc-bound}, thus for any $w\in W_m$, $x,y\in V_w$, there exist $C>0$ such that
\begin{align*}
	\left| u(x)-u(y) \right| \leq C \left(\frac{4}{35}\right)^{m} \max_{1 \leq k < l \leq 4} \left| u(p_k)-u(p_l) \right|.
\end{align*} 
Then, we also prove that \eqref{eq_GRH} holds in the cable system induced by eyebolted Vicsek cross, and \eqref{eq_HR} holds in the bounded and unbounded eyebolted Vicsek cross.

\bibliographystyle{plain}

\begin{thebibliography}{10}
	
\bibitem{ACDH04}
P.~Auscher, T.~Coulhon, X.~T. Duong, and S.~Hofmann.
\newblock Riesz transform on manifolds and heat kernel regularity.
\newblock {\em Ann. Sci. \'Ecole Norm. Sup.}, 37(6):911--957, 2004.



\bibitem{Bar17}
M.~T. {Barlow} 
\newblock {\em Random walks and heat kernels on graphs}, volume 438 of {\em London Mathematical Society Lecture Note Series},
\newblock Cambridge University Press, Cambridge, 2017.


\bibitem{BB04}
M.~T. {Barlow} and R.~F. {Bass}.
\newblock Stability of parabolic Harnack inequalities.
\newblock {\em Trans. Amer. Math. Soc.}, 356(4):1501--1533, 2004.


\bibitem{BC23}
F.~{Baudoin} and L.~{Chen}.
\newblock {Heat kernel gradient estimates for the Vicsek set}.
\newblock {\em Math. Nachr.},  297 (12):4450--4477, 2024.

\bibitem{CCH06}
G.~{Carron}, T.~{Coulhon} and A.~{Hassell}. 
\newblock {Riesz transform and $L_p$-cohomology for manifolds with Euclidean ends}.
\newblock {\em {Duke Math. J.}}, 133(1):59–93, 2006.

\bibitem{Chen15}
L.~{Chen}.
\newblock {Sub-Gaussian heat kernel estimates and quasi Riesz transforms for
	\(1\leq p\leq 2\)}.
\newblock {\em {Publ. Mat., Barc.}}, 59(2):313--338, 2015.

\bibitem{CCFR17}
Li~{Chen}, T.~{Coulhon}, J.~{Feneuil}, and E.~{Russ}.
\newblock Riesz transform for {$1\le p\le 2$} without {G}aussian heat kernel
bound.
\newblock {\em J. Geom. Anal.}, 27(2):1489--1514, 2017.

\bibitem{CD99}
T.~{Coulhon} and X.~T. {Duong}.
\newblock {Riesz transforms for \(1\leq p\leq 2\)}.
\newblock {\em {Trans. Am. Math. Soc.}}, 351(3):1151--1169, 1999.


\bibitem{CD03}
T.~{Coulhon} and X.~T. {Duong}.
\newblock {Riesz transform and related inequalities on noncompact Riemannian manifolds}.
\newblock {\em {Comm. Pure Appl. Math.}}, 56(12):1728–1751, 2003.




\bibitem{CJKS20}
T.~{Coulhon}, R.~{Jiang}, P.~{Koskela}, and A.~{Sikora}.
\newblock {Gradient estimates for heat kernels and harmonic functions}.
\newblock {\em {J. Funct. Anal.}}, 278(8):67, 2020.


\bibitem{Dev15}
B.~{Devyver}.
\newblock {A perturbation result for the Riesz transform}.
\newblock {\em {Ann. Sc. Norm. Super. Pisa Cl. Sci.(5)}},
\newblock 14(3): 937--964, 2015;

\bibitem{DR26}
B.~{Devyver}, E.~{Russ}.
\newblock {Reverse Inequality for Quasi-Riesz Transforms on Cable Systems.}
\newblock {\em {J. Geom. Anal.}}, 36:2, 2026.

\bibitem{DRY23}
B.~{Devyver}, E.~{Russ}, and M.~{Yang}.
\newblock {Gradient Estimate for the Heat Kernel on Some Fractal-Like Cable
	Systems and Quasi-Riesz Transforms}.
\newblock {\em Int. Math. Res. Not.},
2023(18):15537--15583, 

\bibitem{Fen26}
J.~{Feneuil},  
\newblock {In Spaces with a Slow Diffusion, the Riesz Transform is Unbounded on \(L^p, p\in(2,\infty)\)}
\newblock {\em {J. Geom. Anal.}}, 36:61, 2026.



\bibitem{GY26}
J.~{Gao} and M.~{Yang}.
\newblock{H\"older regularity of harmonic functions on metric measure spaces},
\newblock {\em {Adv. Math.}}, 488:110797, 2026.

\bibitem{GYZ26}
J.~{Gao}, Z.~{Yu} and J.~{Zhang}.
Heat kernel-based p-energy norms on metric measure spaces, to appear Math. Proc. Camb. Phil. Soc., 2026.

\bibitem{Gri92}
A.~{Grigor'yan}.
\newblock {The heat equation on noncompact Riemannian manifolds}.
\newblock {\em {Math. USSR, Sb.}}, 72(1), 1992.

\bibitem{GL20A}
Q.~{Gu} and K.-S.{Lau},
\newblock{ Dirichlet forms and convergence of {B}esov
	norms on self-similar sets},
\newblock {\em {Ann. Acad. Sci. Fenn. Math.}},
45:625--646, 2020.

\bibitem{GL20B}
Q.~{Gu} and K.-S.{Lau},
\newblock{ Dirichlet forms and critical exponents on fractals},
\newblock {\em {Trans. Amer. Math. Soc. }},
373(3):1619--1652, 2020.


\bibitem{HW06}
J.~{Hu} and X.~{Wang}.
\newblock{Domains of Dirichlet forms and effective resistance estimates on p.c.f. fractals}.
\newblock {\em { Studia Math.}}, 177(2):153--172, 2006.

\bibitem{LY86}
P.~{Li} and S.~T. {Yau}.
\newblock {On the parabolic kernel of the Schr\"odinger operator}.
\newblock {\em {Acta Math.}}, 156:154--201, 1986.


\bibitem{Jiang21}
R.~{Jiang}.
\newblock{Riesz transform via heat kernel and harmonic functions on non-compact manifolds}.
\newblock {\em {Adv. Math.},}
377:107464, 2021.

\bibitem{KS24}
N.~{Kajino} and R.~{Shimizu}.
\newblock {Contraction properties and differentiability of $p$-energy forms
	with applications to nonlinear potential theory on self-similar sets}.
 arXiv:2404.13668v2, 2024.

\bibitem{Kig89}
J.~{Kigami}.
\newblock {A harmonic calculus on the Sierpi\'nski spaces}.
\newblock {\em {Japan J. Appl. Math.}}, 6(2):259--290, 1989.

\bibitem{Kig93}
J.~{Kigami}.
\newblock {Harmonic calculus on p.c.f.\ self-similar sets}.
\newblock {\em {Trans. Amer. Math. Soc.}}, 335(2):721--755, 1993.


\bibitem{Kig01book}
J.~{Kigami}.
\newblock {\em Analysis on fractals}, volume 143 of {\em Cambridge Tracts in
	Mathematics}.
\newblock Cambridge University Press, Cambridge, 2001.

\bibitem{Kig12}
J.~{Kigami}.
\newblock Resistance forms, quasisymmetric maps and heat kernel estimates.
\newblock {\em Mem. Amer. Math. Soc.}, 216(1015):vi+132, 2012.

\bibitem{Sal90}
L.~{Saloff-Coste}.
\newblock {Analyse sur les groupes de Lie \`a croissance polyn\^omiale.
	(Analysis on Lie groups of polynomial growth)}.
\newblock {\em {Ark. Mat.}}, 28(2):315--331, 1990.

\bibitem{Sal92}
L.~{Saloff-Coste}.
\newblock {A note on Poincar\'e, Sobolev, and Harnack inequalities}.
\newblock {\em {Int. Math. Res. Not.}}, 1992(2):27--38, 1992.

\bibitem{Sal95}
L.~{Saloff-Coste}.
\newblock {Parabolic Harnack inequality for divergence form second order
	differential operators}.
\newblock {\em {Potential Anal.}}, 4(4):429--467, 1995.

\bibitem{Str83}
R.~S. {Strichartz}.
\newblock {Analysis of the Laplacian on a complete Riemannian manifold}.
\newblock {\em {J. Funct. Anal.}}, 52:48--79, 1983.

\bibitem{Str00}
R.~S. {Strichartz}.
\newblock {Taylor approximations on Sierpinski gasket type fractals}.
\newblock {\em {J. Funct. Anal.}}, 174(1):76--127, 2000.

\bibitem{Tep00}
A.~{Teplyaev}. 
\newblock {Gradients on fractals},
\newblock {\em {J. Funct. Anal.}},
	 174(1):128--154, 2000.

\bibitem{THP14}
D.~{Tang}, R.~{Hu}, and C.~{Pan}.
\newblock {H\"older estimates of harmonic functions on a class of p.c.f.
	self-similar sets}.
\newblock {\em {Anal. Theory Appl.}}, 30(3):296--305, 2014.

\end{thebibliography}

\end{document}